\documentclass[11pt,reqno,a4paper]{amsart}
\usepackage[british]{babel}

\setlength{\textwidth}{168.0mm} 
\setlength{\oddsidemargin}{-1.cm}
\setlength{\evensidemargin}{-1.cm}

\usepackage{multicol}
\usepackage{graphicx}
\usepackage{amscd}
\usepackage{amsmath}
\usepackage{amsfonts}
\usepackage{amssymb}
\usepackage{comment}
\usepackage{color}
\usepackage{pdfsync}
\usepackage{bbm}
\usepackage{dsfont}
\usepackage{graphicx}
\usepackage{hyperref}

\definecolor{trp}{rgb}{1,1,1}

\definecolor{red}{rgb}{1,0,.2}

\newtheorem{theorem}{Theorem}[section]
\newtheorem*{theoremA}{Theorem A}
\newtheorem*{theoremB}{Theorem B}
\newtheorem*{theoremC}{Theorem D}
\newtheorem*{propC}{Proposition C}
\theoremstyle{plain}

\newtheorem*{condition*}{Condition}

\newtheorem{lemma}[theorem]{Lemma}

\newtheorem{remark}{Remark}

\numberwithin{equation}{section}

\newcommand{\R}{\mathbb{R}}
\newcommand{\N}{\mathbb{N}}

\newcommand{\proj}{\mathrm{proj}}

\newcommand{\av}{\underline{a}}
\newcommand{\pv}{\underline{p}}
\newcommand{\wv}{\underline{w}}
\newcommand{\xv}{\underline{x}}
\newcommand{\yv}{\underline{y}}
\newcommand{\ii}{\mathbf{i}}

\newcommand{\grad}{\mathrm{grad}}

\newcommand{\ly}[1]{\chi_{#1}}

\newcommand{\il}{\overline{\imath}}
\newcommand{\jl}{\overline{\jmath}}

\DeclareMathOperator*{\esssup}{ess\,sup}


\begin{document}
\title[Overlapping self-affine sets] {On the dimension of self-affine sets and measures with overlaps}

\author{Bal\'azs B\'ar\'any}
\address[Bal\'azs B\'ar\'any]{Budapest University of Technology and Economics, MTA-BME Stochastics Research Group, P.O.Box 91, 1521 Budapest, Hungary \& University of Warwick, Mathematics Institute, Coventry CV4 7AL, UK}
\email{balubsheep@gmail.com}

\author{Micha\l\ Rams}
\address[Micha\l\ Rams]{Institute of Mathematics, Polish Academy of Sciences, ul. \'Sniadeckich 8, 00-656 Warszawa, Poland}
\email{rams@impan.pl}

\author{K\'aroly Simon}
\address[K\'aroly Simon]{Budapest University of Technology and Economics, Department of Stochastics, Institute of Mathematics, 1521 Budapest, P.O.Box 91, Hungary} \email{simonk@math.bme.hu}

\subjclass[2010]{Primary 28A80 Secondary 28A78}
\keywords{Self-affine measures, self-affine sets, Hausdorff dimension.}
\thanks{The research of B\'ar\'any and Simon was partially supported by the grant OTKA K104745. The research of B\'ar\'any was partially supported by the grant EP/J013560/1. Micha\l\ Rams was supported by National Science Centre grant 2014/13/B/ST1/01033 (Poland). This work was partially supported by the  grant  346300 for IMPAN from the Simons Foundation and the matching 2015-2019 Polish MNiSW fund.}

\begin{abstract}

In this paper we consider diagonally affine, planar 
IFS $\Phi=\left\{S_i(x,y)=(\alpha_ix+t_{i,1},\beta_iy+t_{i,2})\right\}_{i=1}^m$.
Combining the techniques of Hochman~\cite{H} and Feng, Hu~\cite{FH}
we compute the Hausdorff dimension of the self-affine attractor and measures and we give an upper bound for the dimension of the exceptional set of parameters.  
\end{abstract}
\date{\today}

\maketitle

\section{Introduction and Statements}

The dimension theory of self-affine sets and measures is far from being completely understood. Even in the special case of diagonally affine IFS, we do not have a complete understanding. Falconer~\cite{F} introduced a formula, the affinity dimension, which gives an upper bound for the upper box counting dimension of self-affine sets, and proved that for almost every translation parameter if the contraction ratios of the maps of the corresponding iterated function system (IFS) are less than $1/3$ then the Hausdorff and box dimension coincide and equal to the given upper bound. Later this bound for contracting ratios was extended by Solomyak~\cite{So} to $1/2$. Przytycki and Urba\'nski \cite{PU} showed that this bound is sharp. For precise definition of affinity dimension in the special diagonal case, see Section~\ref{sdimset}.

Shmerkin~\cite{Sh} studied a family of overlapping self-affine sets and measures generated by diagonal matrices and calculated its dimension using the transversality method. Later, K\"aenm\"aki and Shmerkin~\cite{SK} calculated the box counting dimension of a special family of self-affine sets allowing overlaps. Jordan, Pollicott and Simon~\cite{JPS} considered randomly perturbed self-affine sets and gave the Hausdorff and box dimension for a typical perturbation.

Recently, Fraser and Shmerkin~\cite{FS} considered a family of overlapping self-affine sets related to the Bedford-McMullen carpets. This result uses the new technique in the dimension theory of self-similar sets, recently developed by Hochman~\cite{H}.

Our goal is to give a sufficient condition related to Hochman~\cite{H} for a family of self-affine sets generated by diagonal matrices, which ensures that the Hausdorff and box dimension coincide
and are equal to the bound given by Falconer~\cite{F}.

Let
\begin{equation}\label{eIFS1}
\Phi=\left\{S_i(x,y)=(\alpha_ix+t_{i,1},\beta_iy+t_{i,2})\right\}_{i=1}^m
\end{equation}
be a contracting diagonal affine IFS on the plane such that $S_i([0,1]^2) \subset [0,1]^2$. Let us denote the attractor of $\Phi$ by $\Lambda$. Moreover, denote  the projected iterated function systems of similarities on the line by
\begin{equation}\label{eprojIFS}
\Phi_{\alpha}=\left\{f_i(x)=\alpha_ix+t_{i,1}\right\}_{i=1}^m\text{ and } \Phi_{\beta}=\left\{g_i(x)=\beta_ix+t_{i,2}\right\}_{i=1}^m.
\end{equation}
Denote the attractors of $\Phi_{\alpha}$ and $\Phi_{\beta}$ by $\Lambda_{\alpha}$ and $\Lambda_{\beta}$. It is easy to see that $\Lambda_{\alpha}$ is the orthogonal projection of $\Lambda$ to the $x$-axis and $\Lambda_{\beta}$ is the orthogonal projection of $\Lambda$ to the $y$-axis.

We call a Borel probability measure $\mu$ self-affine if it is compactly supported with support $\Lambda$ and there exists a $\pv=(p_1,\dots,p_m)$ probability vector such that
\begin{equation}\label{edefsameasure}
\mu=\sum_{i=1}^mp_i\mu\circ S_i^{-1}.
\end{equation}

Let us define the entropy and the Lyapunov exponents of the measure $\mu$ in the usual way. That is,
\begin{equation*}
h_{\mu}:=-\sum_{i=1}^mp_i\log p_i,\ \ly{\alpha}:=-\sum_{i=1}^mp_i\log |\alpha_i|,\text{ and }\ly{\beta}:=-\sum_{i=1}^mp_i\log |\beta_i|.
\end{equation*}

Jordan, Pollicott and Simon~\cite{JPS} defined the Lyapunov dimension, which is an upper bound for the Hausdorff dimension of self-affine measures. We give a sufficient condition, which ensures that the Hausdorff dimension is equal to this bound.

\begin{condition*}
We say that an IFS $\mathcal{G}=\left\{f_i(x)\right\}_{i\in\mathcal{S}}$ of similarities on the real line satisfies the \underline{Hochman-condition} if there exists an $\varepsilon>0$ such that for every $n>0$ $$\min\left\{\Delta(\il,\jl):\il,\jl\in\mathcal{S}^n,\ \il\neq\jl\right\}>\varepsilon^n,$$ where
\[
\Delta(\il,\jl)=\left\{\begin{array}{cc}
             \infty & f_{\il}'(0)\neq f_{\jl}'(0) \\
             \left|f_{\il}(0)-f_{\jl}(0)\right| & f_{\il}'(0)=f_{\jl}'(0).
           \end{array}\right.
\]
\end{condition*}

If the parameters of the IFS $\mathcal{G}=\left\{f_i(x)\right\}_{i\in\mathcal{S}}$ of similarities are algebraic, i.e. $f_i(0)$ and $f_i'(0)$ are algebraic numbers, then either the Hochman-condition holds or there is a complete overlap, that is, there exist $n\geq1$, and $\il\neq\jl\in\mathcal{S}^n$ such that $f_{\il}(0)=f_{\jl}(0)$, see \cite[Lemma~5.10]{H}.

Applying the results of Hochman~\cite{H} and Feng and Hu~\cite{FH} (see Section~\ref{spre}), we obtain the following results.

\begin{theoremA}
Let $\Phi$ be an IFS of the form \eqref{eIFS1} and let $\mu$ be a self-affine measure of the form \eqref{edefsameasure}. Without loss of generality we may assume that $\ly{\alpha}\leq\ly{\beta}$ (i.e. the direction of $y$-axis is strong stable).
\begin{enumerate}
    \item Suppose $\Phi_{\alpha}$ satisfies the Hochman-condition and $\dfrac{h_{\mu}}{\ly{\alpha}}\leq1$. Then $$\dim_H\mu=\dfrac{h_{\mu}}{\ly{\alpha}}.$$
    \item Suppose $\Phi_{\alpha}$ and $\Phi_{\beta}$ satisfy the Hochman-condition and $\dfrac{h_{\mu}}{\ly{\beta}}\leq1<\dfrac{h_{\mu}}{\ly{\alpha}}$. Then $$\dim_H\mu=1+\dfrac{h_{\mu}-\ly{\alpha}}{\ly{\beta}}.$$
\end{enumerate}
\end{theoremA}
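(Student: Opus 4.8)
The plan is to prove the lower bounds $\dim_H\mu\ge \frac{h_\mu}{\ly{\alpha}}$ in case (1) and $\dim_H\mu\ge 1+\frac{h_\mu-\ly{\alpha}}{\ly{\beta}}$ in case (2); the matching upper bounds come for free, since the right-hand sides are exactly the Lyapunov dimension of $\mu$, which dominates $\dim_H\mu$ by Jordan, Pollicott and Simon~\cite{JPS}. Throughout I write $\mu=\Pi_*\mathbb{P}$, where $\Pi\colon\{1,\dots,m\}^{\mathbb{N}}\to\Lambda$ is the natural coding map and $\mathbb{P}$ is the Bernoulli measure with weights $\pv$, and I let $\mu_\alpha$ and $\mu_\beta$ be the orthogonal projections of $\mu$ onto the two axes; by construction these are the self-similar measures generated by $\Phi_\alpha$ and $\Phi_\beta$ with the same weights $\pv$.

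The first ingredient is Feng and Hu~\cite{FH}. Their theory shows that $\mu$ is exact dimensional and supplies a Ledrappier--Young type decomposition of the local dimension along the strongly contracted ($y$) direction. From this I would extract the formula
\begin{equation*}
\dim_H\mu=\dim_H\mu_\alpha+\frac{h_\mu-\ly{\alpha}\dim_H\mu_\alpha}{\ly{\beta}},
\end{equation*}
in which $\dim_H\mu_\alpha$ is the dimension of the projection transverse to the $y$-direction and the numerator is the conditional entropy carried by the disintegration of $\mu$ over $\mu_\alpha$. The essential output of Feng and Hu's projection-entropy formalism here is a conservation of entropy: the $x$-projection realises entropy rate $\ly{\alpha}\dim_H\mu_\alpha$, and the leftover $h_\mu-\ly{\alpha}\dim_H\mu_\alpha$ is precisely what the conditional measures on the $y$-fibres convert into dimension at rate $\ly{\beta}$.

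The second ingredient is Hochman~\cite{H}. The Hochman-condition forces $\Delta(\il,\jl)>\varepsilon^n>0$ for all distinct $\il,\jl\in\mathcal{S}^n$, so distinct words of equal length induce distinct similarities; hence $\Phi_\alpha$ has no exact overlaps and the random-walk entropy of its weights equals $h_\mu$. Hochman's dimension theorem then yields $\dim_H\mu_\alpha=\min\{1,h_\mu/\ly{\alpha}\}$ (and the same argument gives $\dim_H\mu_\beta=\min\{1,h_\mu/\ly{\beta}\}$ once $\Phi_\beta$ satisfies the condition). In case (1) one has $h_\mu/\ly{\alpha}\le1$, so $\dim_H\mu_\alpha=h_\mu/\ly{\alpha}$, the conditional numerator vanishes, and the displayed formula collapses to $\dim_H\mu=h_\mu/\ly{\alpha}$; only the condition on $\Phi_\alpha$ is used. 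In case (2) one has $\dim_H\mu_\alpha=1$, the $x$-projection is of full dimension, and the formula reduces to $\dim_H\mu=1+\frac{h_\mu-\ly{\alpha}}{\ly{\beta}}$.

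The main obstacle is the conditional (fibre) term in case (2): one must show that the $y$-structure resolves at the full rate $\ly{\beta}$ along $\mu_\alpha$-typical fibres, which is exactly what overlaps can spoil and what forces the Hochman-condition on $\Phi_\beta$ (what case (2) genuinely needs is not $\dim_H\mu_\beta$ itself, but this control transferred to the conditional measures). Technically this amounts to reconciling Feng and Hu's conditional-measure analysis with Hochman's entropy-at-scale estimates: the latter are phrased on dyadic, conformal scales, whereas the diagonal maps produce genuinely anisotropic approximate squares of size $e^{-n\ly{\alpha}}\times e^{-n\ly{\beta}}$. Controlling the entropy lost under projection uniformly across these scales, and carrying the no-exact-overlap consequence of the Hochman-condition over to the conditional measures, is the heart of the argument. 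Case (1), where the fibre term is absent, needs only the condition on $\Phi_\alpha$ and is correspondingly lighter.
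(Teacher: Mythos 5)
Your upper bounds and your case (1) are essentially fine (the paper's own case (1) argument is even shorter: $\dim_H\mu\ge\dim_H\mu_\alpha=h_\mu/\ly{\alpha}$ by Hochman's theorem, since projection cannot raise dimension, plus the trivial upper bound). But case (2) has a genuine gap, and it sits exactly where you locate ``the heart of the argument.'' The formula you propose to extract from Feng and Hu,
\[
\dim_H\mu=\dim_H\mu_\alpha+\frac{h_\mu-\ly{\alpha}\dim_H\mu_\alpha}{\ly{\beta}},
\]
is not what their theory gives. Their Ledrappier--Young type result (Theorem~\ref{tFH2}) yields
\[
\dim_H\mu=\frac{h_{\pi_{\alpha}}}{\ly{\alpha}}+\frac{h_{\pi}-h_{\pi_{\alpha}}}{\ly{\beta}},
\]
where $h_{\pi}$ is the \emph{projection entropy} of the planar coding map, and for an overlapping system $h_{\pi}$ can be strictly smaller than $h_\mu$: there is no conservation of entropy for free. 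Replacing $h_{\pi}$ by $h_\mu$ is precisely the assertion that the planar measure suffers no entropy drop, which is the nontrivial content of Theorem~A(2). Your proposal assumes this substitution, then concedes that justifying the fibre term is ``the main obstacle,'' and offers only a description of the difficulty (reconciling conditional measures with Hochman's entropy-at-scale estimates on anisotropic approximate squares) rather than an argument. As written, the proof is circular at its crucial step, and the route you gesture at (multi-scale entropy analysis on non-conformal scales) is not developed and is considerably harder than what is actually needed.

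For comparison, the paper closes this gap without ever touching anisotropic scales: Hochman's theorem is used only as a black box on the two one-dimensional systems $\Phi_\alpha$ and $\Phi_\beta$, and the transfer to the planar measure is done by an auxiliary lift. One adds a third coordinate with a very small contraction ratio $\rho$ and translations chosen so that the lifted IFS $\widehat\Phi$ on $[0,1]^3$ and the derived IFS $\widetilde\Phi$ on the $(y,z)$-plane both satisfy the strong separation condition; by Theorem~\ref{tFH3} their projection entropies equal $h_\mu$. Hochman's theorem applied to $\Phi_\beta$ (this is where $h_\mu\le\ly{\beta}$ enters) gives $h_{\pi_\beta}=h_\mu$, hence the conditional measures of $\widetilde\mu$ on vertical fibres have dimension $(h_\mu-h_{\pi_\beta})/(-\log\rho)=0$. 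A disintegration identity shows that the conditional measures of $\widehat\mu$ on the lines $\{x\}\times\{y\}\times[0,1]$ project, for almost every fibre, into those zero-dimensional measures, so they too have dimension zero, and Theorem~\ref{tFH3} then forces $(h_\mu-h_{\pi})/(-\log\rho)=0$, i.e. $h_{\pi}=h_\mu$. Only at that point does the Ledrappier--Young formula, combined with $h_{\pi_\alpha}=\ly{\alpha}$ (Hochman applied to $\Phi_\alpha$ in the regime $h_\mu>\ly{\alpha}$), yield $\dim_H\mu=1+(h_\mu-\ly{\alpha})/\ly{\beta}$. Some transfer device of this kind is what your proposal is missing; identifying the obstacle is not the same as overcoming it.
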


Here we recall the Hausdorff dimension of a probability measure $\mu$,
\begin{equation*}
	\dim_H\mu=\inf\left\{\dim_HA:\mu(A)=1\right\}=\esssup_{\mu\sim x}\liminf_{r\to0+}\frac{\log\mu(B_r(x))}{\log r},
\end{equation*}
where $B_r(x)$ denotes the ball with radius $r$ centred at $x$. For the basic properties of Hausdorff dimension we refer to \cite{Fbook}.

As a consequence of Theorem~A we can calculate the dimension of the attractor. Denote by $s_{\alpha}$ and $s_{\beta}$ the similarity dimensions of the IFSs $\Phi_{\alpha}$ and $\Phi_{\beta}$ respectively, i.e. $s_{\alpha}$ and $s_{\beta}$ are the unique solutions of the equations
\begin{equation}\label{esimdim}
\sum_{i=1}^m|\alpha_i|^{s_{\alpha}}=1,\text{ and }\sum_{i=1}^m|\beta_i|^{s_{\beta}}=1.
\end{equation}

\begin{theoremB}
Let $\Phi$ be an IFS of the form \eqref{eIFS1} and let $\Lambda$ be the attractor of $\Phi$. Without loss of generality we may assume that $s_{\beta}\leq s_{\alpha}$.
\begin{enumerate}
    \item Suppose $\Phi_{\alpha}$ satisfies the Hochman-condition and $s_{\alpha}\leq1$. Then $$\dim_H\Lambda=\dim_B\Lambda=s_{\alpha}.$$
    \item Suppose $\Phi_{\alpha}$ and $\Phi_{\beta}$ satisfy the Hochman-condition and $s_{\beta}\leq1<s_{\alpha}$. Then $$\dim_H\Lambda=\dim_B\Lambda=d,$$
where $d$ is the unique solution of $\sum_{i=1}^m|\alpha_i||\beta_i|^{d-1}=1$.
\end{enumerate}
\end{theoremB}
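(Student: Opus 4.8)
The plan is to estimate $\dim_H\Lambda$ from below by the dimension of a carefully chosen self-affine measure, via Theorem~A, and from above by Falconer's affinity dimension, and then to check that the two estimates agree. Throughout, for a word $\il=(i_1,\dots,i_n)$ I write $|\alpha_{\il}|=\prod_{j=1}^n|\alpha_{i_j}|$ and $|\beta_{\il}|=\prod_{j=1}^n|\beta_{i_j}|$ for the two singular values of $S_{\il}$, and I assume genuine contraction so that $\ly{\alpha},\ly{\beta}>0$.

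For the upper bound I would invoke Falconer~\cite{F}: regardless of overlaps, $\dim_B\Lambda\le s_{\mathrm{aff}}$, where $s_{\mathrm{aff}}$ is the exponent at which the (non-increasing) singular value pressure vanishes. Estimating the singular value function by the symmetric bounds
\begin{equation*}
\phi^s(S_{\il})\le|\alpha_{\il}|^s+|\beta_{\il}|^s\quad(0\le s\le1),\qquad
\phi^s(S_{\il})\le|\alpha_{\il}|\,|\beta_{\il}|^{s-1}+|\beta_{\il}|\,|\alpha_{\il}|^{s-1}\quad(1\le s\le2),
\end{equation*}
each summand factorizes over the symbols of $\il$, so that $\sum_{|\il|=n}\phi^s(S_{\il})$ is bounded by $(\sum_i|\alpha_i|^s)^n+(\sum_i|\beta_i|^s)^n$ in the first regime and by $(\sum_i|\alpha_i||\beta_i|^{s-1})^n+(\sum_i|\beta_i||\alpha_i|^{s-1})^n$ in the second. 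At $s=s_\alpha$ (case~(1)) the first factor equals $1$ by \eqref{esimdim} and the second is at most $1$, since $s_\beta\le s_\alpha$ gives $\sum_i|\beta_i|^{s_\alpha}\le\sum_i|\beta_i|^{s_\beta}=1$; at $s=d$ (case~(2)) the first factor equals $1$ by the definition of $d$ and the second is at most $1$, since $s_\beta\le1$ and monotonicity in the exponent give $\sum_i|\beta_i||\alpha_i|^{d-1}\le\sum_i|\beta_i|\le\sum_i|\beta_i|^{s_\beta}=1$. Hence the pressure is nonpositive, so $s_{\mathrm{aff}}\le s_\alpha$, resp.\ $s_{\mathrm{aff}}\le d$, which gives the required upper bound.

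For the lower bound I would apply Theorem~A to the Bernoulli measure $\mu$ with weights $\pv$ given by $p_i=|\alpha_i|^{s_\alpha}$ in case~(1) and $p_i=|\alpha_i||\beta_i|^{d-1}$ in case~(2); in both cases $\sum_ip_i=1$ by \eqref{esimdim}, resp.\ by the definition of $d$. A direct computation yields $h_\mu=s_\alpha\ly{\alpha}$ in case~(1), so that $h_\mu/\ly{\alpha}=s_\alpha\le1$, and $h_\mu=\ly{\alpha}+(d-1)\ly{\beta}$ in case~(2), so that the formula of Theorem~A(2) would read $1+(h_\mu-\ly{\alpha})/\ly{\beta}=d$. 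Thus, once the hypotheses of the relevant case are verified, we obtain $\dim_H\Lambda\ge\dim_H\mu=s_\alpha$, resp.\ $d$, matching the upper bound and forcing $\dim_H\Lambda=\dim_B\Lambda$.

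The crux, and the step I expect to be most delicate, is to verify that these measures satisfy the ordering $\ly{\alpha}\le\ly{\beta}$ and the entropy--exponent inequalities required by Theorem~A, because those conditions are stated at the level of the measure while our hypotheses ($s_\beta\le s_\alpha$, resp.\ $s_\beta\le1$) live at the level of the IFS. I would bridge this gap with the Gibbs inequality $\sum_ip_i\log(q_i/p_i)\le0$, comparing $\mu$ with the self-similar weights $q_i=|\beta_i|^{s_\beta}$ of $\Phi_\beta$. In case~(1) it gives $s_\alpha\ly{\alpha}\le s_\beta\ly{\beta}$, whence $\ly{\alpha}\le(s_\beta/s_\alpha)\ly{\beta}\le\ly{\beta}$, which together with $h_\mu/\ly{\alpha}\le1$ is precisely the hypothesis of Theorem~A(1). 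In case~(2) the same inequality yields $\ly{\alpha}\le(1+s_\beta-d)\ly{\beta}\le(2-d)\ly{\beta}$ (using $s_\beta\le1$); rearranging gives $h_\mu/\ly{\beta}\le1$ and, a fortiori, $\ly{\alpha}\le\ly{\beta}$, while $d>1$ makes $h_\mu/\ly{\alpha}>1$ automatic, so Theorem~A(2) applies. This closes the sandwich and yields the stated values.
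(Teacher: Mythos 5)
Your proposal is correct, and its skeleton is the same as the paper's: sandwich the attractor between Falconer's affinity bound (the paper's inequality \eqref{eboxupper}) and the dimension, computed through Theorem~A, of the self-affine measure with Bernoulli weights $p_i=|\alpha_i|^{s_{\alpha}}$, resp.\ $p_i=|\alpha_i||\beta_i|^{d-1}$. The differences are local but worth recording. At what you rightly identify as the crux --- verifying $\ly{\alpha}\le\ly{\beta}$ and $h_{\mu}/\ly{\beta}\le1$ for the chosen measure --- the paper proves the key inequality $h_{\mu}\le s_{\beta}\ly{\beta}$ geometrically: it cites Simon--Solomyak \cite{SS} to replace $\Phi_{\beta}$ by an IFS with the same contraction ratios satisfying the open set condition, so that the dimension $h_{\mu}/\ly{\beta}$ of the associated self-similar measure is bounded by $s_{\beta}$, and then runs a contradiction argument. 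Your Gibbs-inequality derivation ($\sum_i p_i\log(q_i/p_i)\le0$ with $q_i=|\beta_i|^{s_{\beta}}$) of the identical inequality is more elementary and self-contained, and handles cases (1) and (2) by one uniform mechanism. Second, for part~(1) the paper does not route through Theorem~A(1) at all, and hence never needs the ordering $\ly{\alpha}\le\ly{\beta}$ there: it applies Hochman's Theorem~\ref{tHochman} directly to the projected measure $\mu_{\alpha}$ and uses $s_{\alpha}=\dim_H\mu_{\alpha}\le\dim_H\mu\le\dim_H\Lambda$; your detour through A(1) is valid, but the ordering check it forces on you is avoidable. Third, you re-derive the pressure bound by explicit singular-value estimates where the paper simply cites \eqref{eboxupper} via \cite{F} and \cite{FM}; same content. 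One step you leave implicit: to use the $1\le s\le 2$ branch of the singular value function at $s=d$ you need $1<d<2$, which follows from $\sum_i|\alpha_i|>1$ and $\sum_i|\alpha_i||\beta_i|<\sum_i|\beta_i|\le1$ together with monotonicity in the exponent --- worth a line, although the paper leaves the same check implicit when it asserts that the root $t_0$ of $P_{\Phi}$ equals $d$.
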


\begin{remark}
	Unfortunately, our method does not allow us to extend the result to the case $1<s_{\alpha},s_{\beta}$. To examine this case, we would need a better understanding of overlapping self-similar sets in $\R^d$, $d\geq2$. We guess that if $\Phi_{\alpha}$ and $\Phi_{\beta}$ satisfy the Hochman-condition and there is an $i$ such that $\alpha_i\neq\beta_i$ (i.e. the IFS is strictly affine) then the dimension of the attractor is equal to the affinity dimension and the dimensions of self-affine measures are equal to their Lyapunov dimension.
\end{remark}

By using the method of Fraser and Shmerkin~\cite{FS}, we can give some estimate on the exceptional parameters.

\begin{propC}
	Let $\Phi$ be an IFS of the form \eqref{eIFS1}. Let us assume that $\max_{i\neq j}\left\{|\alpha_i|+|\alpha_j|\right\}<1$ and $\sum_{i=1}^m|\beta_i|\leq1$. Then there exists a set $\mathcal{T}\subset\R^{2m}$ such that $\dim_P\mathcal{T}\leq2m-2$ and for every $(t_{1,1},\dots,t_{m,1},t_{1,2},\dots,t_{m,2})\in\R^{2m}\setminus\mathcal{T}$ the statements of Theorem~A and Theorem~B hold.
\end{propC}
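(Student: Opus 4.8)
The plan is to follow the transversality scheme of Fraser and Shmerkin~\cite{FS}. Throughout I fix the ratios $\alpha_i,\beta_i$ and treat the translation vectors $u=(t_{1,1},\dots,t_{m,1})$ and $w=(t_{1,2},\dots,t_{m,2})$ as the variables, working inside a fixed ball. The first reduction is to observe that, by Theorems~A and~B together with the Ledrappier--Young formula of Feng and Hu~\cite{FH}, the dimension formulas are governed by the projected entropies and Lyapunov exponents, and hence persist as long as the \emph{planar} system $\Phi$ does not exhibit super-exponential concentration of its cylinders. It therefore suffices to produce a set $\mathcal T\subseteq\R^{2m}$ of packing dimension at most $2m-2$ off which no such concentration occurs, so that the hypotheses underlying Theorems~A and~B can be arranged to hold.

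Next I would record the affine structure of the relevant differences. For $\il=(i_1,\dots,i_n)$ one has $f_{\il}(0)=\sum_{k=1}^{n}\bigl(\prod_{l<k}\alpha_{i_l}\bigr)t_{i_k,1}$, so whenever $\il\neq\jl\in\mathcal S^{n}$ satisfy $f_{\il}'=f_{\jl}'$ the quantity $\Delta(\il,\jl)=|f_{\il}(0)-f_{\jl}(0)|$ equals $|L^{\alpha}_{\il,\jl}(u)|$ for a linear functional $L^{\alpha}_{\il,\jl}$, and likewise $g_{\il}(0)-g_{\jl}(0)=L^{\beta}_{\il,\jl}(w)$ when $g_{\il}'=g_{\jl}'$. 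Since the maps $x\mapsto\alpha_ix+(1-\alpha_i)$ share the fixed point $1$, every $L^{\alpha}_{\il,\jl}$ annihilates $v_{\alpha}=(1-\alpha_1,\dots,1-\alpha_m)$, and the conjugacies $u\mapsto au+bv_{\alpha}$ preserve the Hochman-condition (similarly in the $\beta$ variable); these symmetries make the slab widths the only small scales. I would then describe $\mathcal T$ as the limsup set
\[
\mathcal T=\bigcap_{\varepsilon>0}\bigcap_{N\ge1}\bigcup_{n\ge N}\ \bigcup_{\substack{\il\neq\jl\in\mathcal S^{n}\\ f_{\il}'=f_{\jl}',\ g_{\il}'=g_{\jl}'}}\bigl\{(u,w):|L^{\alpha}_{\il,\jl}(u)|\le\varepsilon^{n},\ |L^{\beta}_{\il,\jl}(w)|\le\varepsilon^{n}\bigr\},
\]
the two constraints appearing together precisely because only pairs with equal planar linear part can concentrate.

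The two hypotheses enter as transversality statements. After stripping a common prefix and reducing to the first symbol at which $\il,\jl$ differ, the condition $\max_{i\neq j}\{|\alpha_i|+|\alpha_j|\}<1$ yields a lower bound, uniform in $n$ and in the pair, on the corresponding component of $\nabla L^{\alpha}_{\il,\jl}$ (once the $v_{\alpha}$ direction is removed), and $\sum_i|\beta_i|\le1$ plays the same role for $L^{\beta}_{\il,\jl}$. Granting these bounds, each set in the inner union is an intersection of two genuinely thin slabs, of width $\lesssim\varepsilon^{n}$ in two independent coordinate blocks, hence is covered by $\lesssim\varepsilon^{-n(2m-2)}$ balls of radius $\varepsilon^{n}$. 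As there are at most $m^{2n}$ pairs at level $n$, the series $\sum_{n}m^{2n}\varepsilon^{-n(2m-2)}(\varepsilon^{n})^{s}=\sum_{n}\bigl(m^{2}\varepsilon^{\,s-(2m-2)}\bigr)^{n}$ converges once $s>2m-2$ and $\varepsilon$ is small; letting $\varepsilon\to0$ bounds the packing dimension of $\mathcal T$ by $2m-2$. I would phrase this through the standard upper box-counting estimate for limsup sets of shrinking neighbourhoods so that the conclusion is genuinely about packing dimension.

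The main obstacle, and the reason the exponent is $2m-2$ rather than the $2m-1$ produced by a naive per-coordinate count, is the reduction in the first paragraph. A single projected system can fail the Hochman-condition on a codimension-one locus (for instance where two of the maps $f_i$ share a fixed point), so the gain of the second codimension must come from the interplay between the directions: the key claim to verify is that such a one-sided degeneracy is absorbed by the Feng--Hu formula and that the dimension formulas can break down only when concentration occurs \emph{simultaneously} in $x$ and $y$, which is exactly the double-slab description above. Establishing the prefix-independent lower bounds on $\nabla L^{\alpha}_{\il,\jl}$ and $\nabla L^{\beta}_{\il,\jl}$ from the two hypotheses, and checking that pairs with unequal planar linear part contribute nothing (their $\Delta$ is infinite in at least one coordinate), are the remaining technical points I expect to be routine once the reduction is secured.
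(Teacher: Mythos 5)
There is a genuine gap in your proposal, and it sits exactly at the step you yourself flag as ``the key claim to verify'': the reduction asserting that the conclusions of Theorems~A and~B can only fail when super-exponential concentration occurs \emph{simultaneously} in both coordinates, witnessed by a single pair $\il\neq\jl$ with equal planar linear part. This claim is false, so your double-slab set $\mathcal{T}$ cannot contain the exceptional set. Concretely: take $m=3$, $\alpha_1=\alpha_2=\alpha_3=0.49$ and $\beta_1=\beta_2=\beta_3=1/3$, so that $\max_{i\neq j}\{|\alpha_i|+|\alpha_j|\}=0.98<1$, $\sum_i|\beta_i|=1$ and $s_{\beta}=1<s_{\alpha}$. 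Choose $t_{i,2}=0$ for every $i$, and $(t_{1,1},t_{2,1},t_{3,1})$ outside the set of packing dimension at most $m-1$ where $\Phi_{\alpha}$ fails the Hochman-condition. Then the three maps $g_i$ coincide, so $\Lambda=\Lambda_{\alpha}\times\{0\}$ and $\dim_H\Lambda=\dim_B\Lambda=\min\{1,s_{\alpha}\}=1$, whereas Theorem~B(2) predicts the value $d>1$ solving $3\cdot 0.49\cdot(1/3)^{d-1}=1$ (Theorem~A(2) fails similarly for the self-affine measure with weights $(1/3,1/3,1/3)$). Yet this parameter does \emph{not} lie in your $\mathcal{T}$: every pair in your union has $f_{\il}'=f_{\jl}'$, and the Hochman-condition for $\Phi_{\alpha}$ gives $|L^{\alpha}_{\il,\jl}(u)|>\varepsilon_0^n$ for all such pairs, so for $\varepsilon<\varepsilon_0$ the double-slab condition is never met. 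The structural point your reduction misses is that Theorems~A(2) and~B(2) need the Hochman-condition for \emph{both} projections, and degeneration of one projection alone already destroys the dimension formulas while creating no concentration whatsoever in the other coordinate. A second defect of the same kind: even simultaneous failure of the two one-dimensional conditions need not be witnessed by a \emph{common} pair --- if $\alpha_1=\alpha_2$ but $\beta_1\neq\beta_2$, the pair $\il=(1)$, $\jl=(2)$ constrains $\Phi_{\alpha}$ but is excluded from your union because its planar linear parts differ.

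For comparison, the paper never formulates a planar concentration condition at all; it proves a purely one-dimensional lemma (for fixed ratios with $\max_{i\neq j}\{|r_i|+|r_j|\}<1$, the translation vectors for which the IFS on $\R$ fails the Hochman-condition form a set of packing dimension at most $m-1$), applies it separately to $\Phi_{\alpha}$ and $\Phi_{\beta}$ to get $E_1,E_2\subset\R^m$, and then forms a product. Two substantive points differ from your sketch. First, your transversality step is unjustified as stated: stripping a common prefix does not bound $\nabla L^{\alpha}_{\il,\jl}$ from below, because the coefficient $1$ attached to the first differing symbol can be cancelled by the tail of the word, whose contribution can be as large as $\sum_{k\geq1}(0.49)^k\approx 0.96$ under your hypothesis. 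The paper obtains the bound by lifting (following Falconer and Simon--Solomyak) to an IFS in $\R^{m-1}$ with the same ratios satisfying the SSC, realizing every translation vector as a linear projection $\Pi_{\wv}$ of the lifted system; then the $\wv$-gradient of $\Pi_{\wv}(g_{\il}(0))-\Pi_{\wv}(g_{\jl}(0))$ is a difference of points in disjoint cylinders of the lifted attractor, hence of norm at least $\delta^n$ --- this is where $\max_{i\neq j}\{|\alpha_i|+|\alpha_j|\}<1$ actually enters, as it makes the SSC lift possible. Second, be aware that the passage to codimension two is delicate even in the paper: what the coordinate-wise argument yields unconditionally is that both Hochman-conditions hold off $(E_1\times\R^m)\cup(\R^m\times E_2)$, a set of packing dimension $2m-1$; the paper then takes $\mathcal{T}=E_1\times E_2$ of dimension $2m-2$, and justifying that final step requires precisely the simultaneity statement which the example above shows to be false pointwise. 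So your instinct about where the difficulty lies is correct, but the resolution you propose fails, and it cannot be repaired within the double-slab framework.
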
 

Peres and Shmerkin \cite{PS} showed that for every self-similar set in $\R$ or $\R^2$ for any $\varepsilon>0$ there exists a self-similar set contained in the original one with dimension $\varepsilon$-close to the dimension of the original set such that the IFS satisfies strong separation condition (SSC) and the functions share a common contraction ratio. That is, the IFS is homogeneous. We show that under the above conditions there exists a homogeneous self-affine set satisfying the strong separation condition which approximates the dimension of the original set from below.

For an IFS $\mathcal{G}=\left\{\psi_{i}\right\}_{i=1}^M$ we define the $k$th iterate by $\mathcal{G}^k=\left\{\psi_{i_1}\circ\cdots\circ\psi_{i_k}\right\}_{i_1,\dots,i_k=1}^M$.

\begin{theoremC}
	Let $\Phi$ be an IFS of the form \eqref{eIFS1} and let $\Lambda$ be the attractor of $\Phi$. Without loss of generality we may assume that $s_{\beta}\leq s_{\alpha}$. Suppose that either	
	\begin{enumerate}
		\item $\Phi_{\alpha}$ satisfies the Hochman-condition and $s_{\alpha}\leq1$,
	\end{enumerate}
	or
	\begin{enumerate}
		\setcounter{enumi}{1}
		\item $\Phi_{\alpha}$, $\Phi_{\beta}$ satisfy the Hochman-condition and $s_{\beta}\leq1<s_{\alpha}$.
	\end{enumerate}
	Then for every $\varepsilon>0$ there exists a homogeneous affine IFS~$\Psi$ of the form
	\begin{equation}\label{ehomoIFS}
	\Psi=\left\{T_j(x,y)=(\alpha x+u_{j,1},\beta y+u_{j,2})\right\}_{j=1}^{k}
	\end{equation}
	with attractor $\Gamma\subseteq\Lambda$ such that $\Psi$ is a subsystem of some iterate of $\Phi$ and satisfies the SSC, i.e. $T_i(\Gamma)\cap T_j(\Gamma)=\emptyset$ and
	\begin{equation*}\label{eapprox}
	\dim_H\Lambda-\varepsilon=\dim_P\Lambda-\varepsilon=\dim_B\Lambda-\varepsilon\leq\dim_H\Gamma=\dim_P\Gamma=\dim_B\Gamma.
	\end{equation*}
\end{theoremC}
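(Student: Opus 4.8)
The plan is to reduce everything to the construction of a single homogeneous subsystem and then read off its dimension from Theorem~B. First note that $\dim_H\Lambda=\dim_B\Lambda$ by Theorem~B, and since $\dim_H\le\dim_P\le\dim_B$ always holds, all three dimensions of $\Lambda$ coincide, which justifies the left-hand side of the asserted chain. For the right-hand side, suppose we have produced a homogeneous IFS $\Psi$ of the form \eqref{ehomoIFS} which is a subsystem of some iterate $\Phi^n$ and satisfies the SSC. Then $\Gamma$ is the attractor of a self-affine system with a single linear part under the SSC, so $\dim_H\Gamma=\dim_P\Gamma=\dim_B\Gamma$ equals the affinity dimension of $\Psi$. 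Equivalently, the projected systems $\Psi_\alpha$ and $\Psi_\beta$ inherit the Hochman-condition from $\Phi_\alpha$ and $\Phi_\beta$: a level-$N$ word of $\Psi_\alpha$ is a level-$nN$ word of $\Phi_\alpha$, so if $\Phi_\alpha$ has separation constant $\varepsilon_0$ the minimum of $\Delta$ over level-$N$ words of $\Psi_\alpha$ is bounded below by $(\varepsilon_0^{n})^{N}$, which is the Hochman-condition for $\Psi_\alpha$; hence Theorem~B applies verbatim to $\Psi$. Thus it suffices to construct, for every $\varepsilon>0$, a homogeneous $\Psi\subseteq\Phi^n$ with the SSC whose affinity dimension is at least $\dim_H\Lambda-\varepsilon$.

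To fix the target, choose the probability vector $p_i=|\alpha_i|^{s_\alpha}$ in case (1) and $p_i=|\alpha_i|\,|\beta_i|^{d-1}$ in case (2); in both cases $\sum_i p_i=1$ by \eqref{esimdim} resp. by the defining equation of $d$, and a direct computation gives $h_p:=-\sum_i p_i\log p_i=\ly{\alpha}$ resp. $h_p=\ly{\alpha}+(d-1)\ly{\beta}$, so that the Lyapunov dimension of $p$ equals $\dim_H\Lambda$. Now pass to the iterate $\Phi^n$ with $n$ large and retain only the words of a single type $\tau=(n_1,\dots,n_m)$ with $n_i=\lfloor np_i\rfloor$. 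Every such word has the same contraction ratios $\alpha_\tau=\prod_i\alpha_i^{n_i}$ and $\beta_\tau=\prod_i\beta_i^{n_i}$, so any subfamily is automatically homogeneous of the form \eqref{ehomoIFS}; moreover there are $K_\tau=\binom{n}{n_1,\dots,n_m}=e^{n(h_p+o(1))}$ of them, and an elementary computation (of $\log K_\tau/(-\log|\alpha_\tau|)$ in case (1), and of the root $s$ of $K_\tau|\alpha_\tau|\,|\beta_\tau|^{s-1}=1$ in case (2)) shows that the full type-$\tau$ family has affinity dimension $\dim_H\Lambda-o(1)$. It therefore remains to prune the type-$\tau$ family to a subfamily $\mathcal{J}$ of cardinality $e^{n(h_p-\varepsilon)}$ satisfying the SSC, i.e. whose $|\alpha_\tau|\times|\beta_\tau|$ cylinder rectangles are pairwise disjoint.

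The pruning is the heart of the matter, and the two cases behave differently according to whether disjointness of the rectangles can be forced through the $x$-coordinate or only through the $y$-coordinate. In case (1), where $s_\alpha\le1$, it is enough to separate in $x$: this is precisely the statement that the self-similar set $\Lambda_\alpha$, which satisfies the Hochman-condition and has dimension $s_\alpha\le1$, contains homogeneous SSC subsystems of near-maximal dimension, which is the theorem of Peres and Shmerkin~\cite{PS}. Since disjoint $x$-intervals force disjoint rectangles, the lift to the plane of such a subsystem already satisfies the SSC; to make it a subsystem of a single iterate with a common $\beta$ as well, one iterates the lifted system $N$ times, obtaining $K^N$ homogeneous compositions with common $x$-ratio, and keeps those sharing the most populous $y$-type, a pigeonhole costing only a factor polynomial in $N$ and hence preserving both the SSC and the dimension as $N\to\infty$.

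Case (2), where $1<s_\alpha$, is the \emph{main obstacle}: now $\dim_H\Lambda>1$, so the rectangles can never be made $x$-disjoint and Peres--Shmerkin cannot be applied directly, since $\Psi$ is genuinely self-affine rather than self-similar. Here the plan is to separate in $y$ and to extract the disjoint family from the exactness of the box dimension supplied by Theorem~B. Subdividing each type-$\tau$ rectangle into $|\alpha_\tau|/|\beta_\tau|$ squares of side $|\beta_\tau|$ produces $e^{nd\ly{\beta}+o(n)}$ squares in total, which matches the covering number $|\beta_\tau|^{-d}=e^{nd\ly{\beta}}$ of $\Lambda$ guaranteed by $\dim_B\Lambda=d$; consequently the rectangles overlap only subexponentially at the scale $|\beta_\tau|$, and the exponential separation of the $y$-translations coming from the Hochman-condition for $\Phi_\beta$ lets a greedy selection keep $e^{n(h_p-\varepsilon)}$ rectangles that are pairwise disjoint in $y$, hence disjoint. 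Converting the box-counting lower bound of Theorem~B into a genuinely disjoint, near-extremal family while respecting the fixed-type constraint, that is, controlling the residual overlaps uniformly in $n$, is the delicate step, and it is here that the combined estimates of Hochman~\cite{H} and Feng--Hu~\cite{FH} recalled in Section~\ref{spre} are needed. Once $\mathcal{J}$ is found, its affinity dimension is $\dim_H\Lambda-\varepsilon+o(1)$, and the reduction of the first paragraph completes the proof.
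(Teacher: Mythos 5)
Your first two paragraphs and your case (1) essentially follow the paper's route. The reduction (the Hochman-condition passes to subsystems of iterates, so Theorem~B can be re-applied to the subsystem) and the fixed-type multinomial construction inside $\Phi^n$ are exactly the paper's Lemma~\ref{lapproxsys}, proved by the same Peres--Shmerkin counting; your case (1) (Peres--Shmerkin on $\Phi_\alpha$ first, then homogenising the $y$-ratio by a pigeonhole over $y$-types) is a viable reordering of the paper's argument, which instead homogenises first and then applies the one-dimensional extraction (Lemma~\ref{lhomogen}) to $\Psi_\alpha$. One point you still need in case (1): to get the exact equality $\dim_H\Gamma=\dim_P\Gamma=\dim_B\Gamma$ (either by applying Theorem~B to the final system or by a direct covering bound) you must know that its $y$-ratio is at most its $x$-ratio, i.e.\ $s_\beta(\Psi)\le s_\alpha(\Psi)$, because no Hochman-condition is available for the $\beta$-projection in this case. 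For the type-class family this follows from Jensen's inequality ($h_p=s_\alpha\chi_\alpha\le s_\beta\chi_\beta\le s_\alpha\chi_\beta$, hence $\chi_\alpha\le\chi_\beta$), but your pigeonholed family sits over a Peres--Shmerkin subsystem chosen with no regard to the $\beta_i$, so the ordering is not automatic.

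The genuine gap is case (2): the step you explicitly defer (``the delicate step'') is the entire content of the theorem there, and the mechanism you sketch would not prove it. The Hochman-condition for $\Phi_\beta$ separates distinct $y$-translations of type-$\tau$ words only by $\varepsilon_0^n$ for some fixed $\varepsilon_0>0$, which may be far smaller than the rectangle height $|\beta_\tau|\approx e^{-n\chi_\beta}$; hence up to $\left(e^{-\chi_\beta}/\varepsilon_0\right)^n$ type-$\tau$ rectangles --- exponentially many --- can have pairwise overlapping $y$-intervals, and a greedy selection may then retain only an exponentially small fraction of the family, far fewer than $e^{n(h_p-\varepsilon)}$ maps. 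Nor does $\dim_B\Lambda=d$ repair this: the covering number of $\Lambda$ at scale $|\beta_\tau|$ can be achieved entirely by cylinders of \emph{other} types, so it places no constraint on how the type-$\tau$ subfamily is distributed, and ``the rectangles overlap only subexponentially'' is a \emph{non sequitur}. What your proposal misses is the paper's observation that the $y$-separation problem is itself a one-dimensional self-similar problem: $\Psi_\beta$ is a homogeneous IFS of similarities on the line, it inherits the Hochman-condition, and its similarity dimension is at most $s_\beta\le1$, so by Hochman's theorem $\dim_H\Gamma_\beta=\log\sharp\Psi/(-\log\widehat{\beta})$ and Lemma~\ref{lhomogen} (Peres--Shmerkin plus Orponen) applies to $\Psi_\beta$ directly, producing $\Psi'\subseteq\Psi^{k'}$ whose $\beta$-projection satisfies the SSC and loses at most $\varepsilon/2$ in dimension. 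Disjointness of the $y$-projections forces the SSC in the plane, and the cardinality loss converts into the bound $t_{\Psi'}\ge t_\Psi-\varepsilon/2$ through the homogeneous pressure equation \eqref{ehomosap}. In short, self-affinity is not an obstruction to Peres--Shmerkin in case (2); one applies it to the projection rather than to the planar system. Without this reduction (or a complete proof of your greedy selection, which would amount to reproving it), case (2) remains unproved.
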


A simple consequence of the result of approximating subsystems of self-similar IFSs by Peres and Shmerkin~\cite[Proposition~6]{PS} and by Farkas~\cite[Proposition~1.9]{Fa} is that the Hausdorff, packing and box counting dimension of the self-similar sets are lower semi-continuous under the natural parametrization. For more general conformal setting, Jonker and Veerman~\cite{JV} showed this phenomenon earlier.

\begin{remark}
	It is an open question, whether $\Phi$ is an IFS of the form~\eqref{eIFS1}, there is a (not necessarily homogeneous) affine IFS with SSC such that its attractor is contained in the attractor of $\Phi$ and approximates the upper box and packing dimension without the Hochman-condition?
\end{remark}

The motivation of this question is the following. The box and packing dimension of the attractor of an IFS of the form~\eqref{eIFS1} with SSC depend continuosly on the parameters and on the dimension of the projections onto the axes, see for example \cite[Theorem~4.1]{B}. But the projections are self-similar sets, whose dimension is lower semi-continuous. Thus, the box and packing dimension of self-affine sets of an IFS of the form~\eqref{eIFS1} would be lower semi-continuous under the natural parametrization. 

A consequence of lower semi-continuity would be that the exceptional set, where the box and packing dimension are not equal to the affinity dimension, is small in topological sense. That is, the exceptional set of parameters is of first Baire category. The proof is similar to Simon and Solomyak~\cite[Theorem~2.3]{SS}. Proposition~C guarantees that the affinity dimension, which is continuous under the natural parametrization, is equal to the box and packing dimension on a dense set, and the affinity dimension is an upper bound for the box and packing dimension. By density, the continuity points of the box and packing dimension must be the points where it coincides with the affinity dimension. But the continuity points of any function are a $G_{\delta}$ set. Hence, the exceptional set is a set of first Baire category. 

\begin{remark}
	Farkas~\cite[Proposition~1.8]{Fa} generalised the result of Peres and Shmerkin~\cite[Proposition~6]{PS} for $\R^d$ proving existence of approximating subsystem with strong separation condition. By applying the method of Peres and Shmerkin~\cite{PS} for Farkas~\cite{Fa}, one can show that the approximating subsystem can be chosen homogeneous in the weaker sense that the functions share a common contraction ratio. However, the homogeneity of linear part cannot be claimed for $d\geq3$ because two general orthogonal transformations in $\R^d$ generate a free group for $d\geq3$.
\end{remark}

\section{Preliminaries}\label{spre}

First we recall here some results and notations of Feng and Hu \cite{FH}. Let $\Psi=\left\{\psi_i\right\}_{i=1}^M$ be a strictly contracting IFS mapping $[0,1]^d$ into itself. Let $\Sigma=\left\{1,\dots,M\right\}^{\N}$ be the corresponding symbolic space, $\sigma$ the usual left-shift operator on $\Sigma$ and let $m$ be a $\sigma$-invariant ergodic measure on $\Sigma$.

Denote by $\Pi$ the corresponding natural projection, i.e. $\Pi(i_0,i_1,\dots)=\lim_{n\rightarrow\infty}\psi_{i_0}\circ\cdots\circ\psi_{i_n}(\underline{0})$. Let $\mathcal{P}=\left\{[1],\dots,[M]\right\}$ be the partition of $\Sigma$, where $[i]=\left\{\ii\in\Sigma:i_0=i\right\}$ and denote by $\mathcal{B}$ the Borel $\sigma$-algebra of $\R^d$.

We define the {\em projection entropy of $m$ under $\Pi$ with respect to $\Psi$} (see \cite[Definition~2.1]{FH}) as
\begin{equation*}
h_{\Pi}(m):=H_m(\mathcal{P}\mid\sigma^{-1}\Pi^{-1}\mathcal{B})-H_m(\mathcal{P}\mid\Pi^{-1}\mathcal{B}),
\end{equation*}
where $H_m(\xi\mid\eta)$ denotes the usual conditional entropy of $\xi$ given $\eta$. We will often use the ergodic, left-shift invariant infinite product measure~$\mathbb{P}=\left\{p_1,\dots,p_M\right\}^{\N}$ on $\Sigma$. Then $\mathbb{P}$ is called the Bernoulli measure with probabilities $(p_1,\dots,p_M)$.

We will now state the results of Feng and Hu~\cite{FH} and Hochman~\cite{H}, frequently used in this paper.

\begin{theorem}\cite[Theorem~2.8]{FH}\label{tFH1}
	Let $\Psi$ be an IFS of similarities on the real line. Then $\dim_H\mu=h_{\Pi}(\mathbb{P})/\chi$, where $\mu=\mathbb{P}\circ\Pi^{-1}$ and $\chi=-\sum_{i=1}^Mp_i\log|\psi_i'(0)|$ is the Lyapunov exponent.
\end{theorem}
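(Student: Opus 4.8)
The plan is to prove the formula by identifying $\dim_H\mu$ with the $\mathbb{P}$-almost everywhere local dimension of $\mu$, and then evaluating that local dimension along the symbolic coding as a ratio of two exponential growth rates. Recall that for an exact-dimensional measure one has $\dim_H\mu=\lim_{r\to0+}\frac{\log\mu(B_r(x))}{\log r}$ for $\mu$-almost every $x$, with the limit constant; so the first thing I would establish is that $\mu=\mathbb{P}\circ\Pi^{-1}$ is exact-dimensional, and then compute the common value of the local dimension at points $x=\Pi(\ii)$ with $\ii$ a $\mathbb{P}$-typical sequence.

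For the denominator I would use ergodicity of $\mathbb{P}$ under $\sigma$: since $\Psi$ consists of similarities, the generation-$n$ cylinder set $C_n(\ii)=\psi_{i_0}\circ\cdots\circ\psi_{i_{n-1}}([0,1])$ is an interval of length $\prod_{k=0}^{n-1}|\psi_{i_k}'(0)|$, and the strong law of large numbers gives $-\tfrac1n\log|C_n(\ii)|\to\chi$ for $\mathbb{P}$-almost every $\ii$. Thus the scales $r=e^{-n\chi+o(n)}$ are exactly the scales realised by the cylinders through $x$, and it remains to control $\mu(B_r(x))$ precisely at these scales.

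For the numerator -- and this is the crux -- I would prove a Shannon--McMillan--Breiman type theorem for the projection entropy, showing that for $\mathbb{P}$-almost every $\ii$ one has $-\tfrac1n\log\mu\bigl(C_n(\ii)\bigr)\to h_{\Pi}(\mathbb{P})$. The route is to first rewrite the projection entropy as a limit of normalised conditional entropies, $h_{\Pi}(\mathbb{P})=\lim_n\tfrac1n H_{\mathbb{P}}\bigl(\textstyle\bigvee_{k=0}^{n-1}\sigma^{-k}\mathcal{P}\mid\Pi^{-1}\mathcal{B}\bigr)$, then to pass to the system of conditional measures of $\mathbb{P}$ along the fibres of $\Pi$, and finally to identify this limit with the almost-sure exponential decay rate of $\mu(C_n(\ii))$ via a relative (martingale) SMB argument. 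The overlaps enter exactly here: in the separated case $\mu(C_n(\ii))$ is comparable to $\mathbb{P}[\ii|_n]=e^{-nh_{\mu}+o(n)}$ with $h_{\mu}=-\sum_i p_i\log p_i$, and one recovers $h_{\Pi}=h_{\mu}$, whereas conditioning on $\Pi^{-1}\mathcal{B}$ subtracts precisely the mass that piles up from distinct symbolic cylinders landing in the same place, producing the strictly smaller rate $h_{\Pi}(\mathbb{P})$.

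The last step converts cylinder measures into ball measures. Since each $C_n(\ii)$ is an interval of length $e^{-n\chi+o(n)}$ containing $x$, one has $\mu(B_r(x))\ge\mu(C_n(\ii))$ for $r\asymp|C_n(\ii)|$, giving the upper bound $\dim_H\mu\le h_{\Pi}(\mathbb{P})/\chi$ at once. For the matching lower bound I would estimate, for $\mathbb{P}$-typical $\ii$, the number of generation-$n$ cylinders of comparable length that meet $B_r(x)$; on the line this bounded-overlap count follows from the similarity structure together with the fact that cylinders of a given generation along a typical orbit have comparable lengths, so that $\mu(B_r(x))\le e^{-nh_{\Pi}(\mathbb{P})+o(n)}$ as well. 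Combining the two growth rates yields $\lim_{r\to0+}\frac{\log\mu(B_r(x))}{\log r}=h_{\Pi}(\mathbb{P})/\chi$ almost surely, hence the claimed value of $\dim_H\mu$. I expect the SMB step for the projection entropy, and the accompanying control of accumulated overlap mass at the critical scale, to be the main obstacle; the Lyapunov scaling and the reduction to local dimension are routine.
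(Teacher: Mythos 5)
A preliminary point: the paper does not prove this statement at all --- it is quoted verbatim from Feng and Hu \cite[Theorem~2.8]{FH} and used as a black box --- so your proposal can only be measured against Feng--Hu's actual argument. Its outer frame (prove exact dimensionality, identify the a.e.\ local dimension with a ratio of an entropy rate to the Lyapunov exponent, handle the denominator by the law of large numbers) does match theirs, but the two steps you yourself call the crux are not merely deferred: as written, both are false. Start with your identity
$h_{\Pi}(\mathbb{P})=\lim_{n\to\infty}\tfrac1n H_{\mathbb{P}}\bigl(\bigvee_{k=0}^{n-1}\sigma^{-k}\mathcal{P}\mid\Pi^{-1}\mathcal{B}\bigr)$.
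Under the strong separation condition $\Pi$ is a homeomorphism onto the attractor, so $\Pi^{-1}\mathcal{B}$ is the full Borel $\sigma$-algebra of $\Sigma$ modulo null sets and the right-hand side is $0$; but in that case $h_{\Pi}(\mathbb{P})=h_{\mathbb{P}}$, which is exactly the last assertion of Theorem~\ref{tFH3} as quoted in the paper. What you wrote down is the entropy \emph{lost} under projection: for the Bernoulli measure $\mathbb{P}$ one has $h_{\Pi}(\mathbb{P})=h_{\mathbb{P}}-\lim_{n}\tfrac1n H_{\mathbb{P}}\bigl(\bigvee_{k=0}^{n-1}\sigma^{-k}\mathcal{P}\mid\Pi^{-1}\mathcal{B}\bigr)$, so an SMB theorem built on your version would produce the constant $h_{\mathbb{P}}-h_{\Pi}(\mathbb{P})$, not $h_{\Pi}(\mathbb{P})$, and both of your bounds inherit this error.

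More fundamentally, the ``bounded-overlap count'' on which your lower bound for the dimension rests is false for overlapping systems, and equal cylinder lengths do not rescue it. Take $\Psi=\{x/2,\,x/2+\varepsilon,\,x/2+1\}$ with uniform weights: every generation-$n$ cylinder image has length exactly $2^{-n}$, yet there are $3^n$ of them. Since $\mu$ lives on the line, its lower local dimension is at most $1$ at $\mu$-a.e.\ point, so for a.e.\ $x$ there are arbitrarily large $n$ with $\mu(B_{2^{-n}}(x))\geq 2^{-n(1+o(1))}$; as each word of length $n$ carries $\mathbb{P}$-mass $3^{-n}$ and $\Pi^{-1}(B_r(x))$ is covered by the symbolic cylinders whose images meet $B_r(x)$, at least $3^n\,2^{-n(1+o(1))}=(3/2)^{n(1-o(1))}$ generation-$n$ cylinders meet that ball. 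This exponential pile-up at typical points is precisely the phenomenon the theorem has to control, and it is why Feng and Hu never count cylinders: they run the argument through the conditional information function $I_{\mathbb{P}}\bigl(\bigvee_{k=0}^{n-1}\sigma^{-k}\mathcal{P}\mid\Pi^{-1}\mathcal{B}\bigr)$ and the conditional measures of $\mathbb{P}$ on the fibres of $\Pi$, combined with a maximal inequality and a Besicovitch-type density argument, converting fibre statements into ball estimates only at the very end. (Even if the count were bounded, you would still need your cylinder SMB estimate simultaneously for all neighbouring cylinders meeting $B_r(x)$, not only for the cylinder of $\ii$.) As it stands, the estimate $\mu(B_r(x))\leq e^{-nh_{\Pi}(\mathbb{P})+o(n)}$, and with it the half of the theorem that makes it nontrivial in the overlapping case, is unproven in your sketch.
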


\begin{theorem}\cite[Theorem~1.1]{H}\label{tHochman}
	Suppose that an IFS $\Psi$ of similarities on the real line satisfies the Hochman-condition. Then for the measure $\mu=\mathbb{P}\circ\Pi^{-1}$, $\dim_H\mu=\min\left\{1,h_{\mu}/\chi\right\}$, where $h_{\mu}=-\sum_{i=1}^Mp_i\log p_i$ and $\chi$ is the Lyapunov exponent.
\end{theorem}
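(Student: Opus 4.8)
The upper bound $\dim_H\mu\le\min\{1,h_\mu/\chi\}$ is the routine part. Since $\mu$ is carried by $\R$ we have $\dim_H\mu\le1$, while covering $\supp\mu$ by the $M^n$ level-$n$ cylinders $\psi_{\ii}([0,1])$, which have diameter comparable to $e^{-\chi n}$ and $\mu$-mass comparable to $e^{-h_\mu n}$ for $\mathbb P$-typical $\ii$, yields $\dim_H\mu\le h_\mu/\chi$ by the mass distribution principle. The entire content is therefore the matching lower bound $\dim_H\mu\ge\min\{1,h_\mu/\chi\}$, and the plan is to prove it by controlling the growth of the entropy of $\mu$ across dyadic scales.

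The first step is to pass from Hausdorff dimension to entropy. Because $\mu=\mathbb P\circ\Pi^{-1}$ is self-similar on the line it is exact dimensional, and $\dim_H\mu$ equals its (suitably normalised) lower entropy dimension $\underline E(\mu)=\liminf_n \frac{H(\mu,\mathcal D_n)}{n\log 2}$, where $\mathcal D_n$ is the level-$n$ dyadic partition and $H$ the Shannon entropy; this exact-dimensionality and the identification $\dim_H\mu=h_\Pi(\mathbb P)/\chi$ are precisely what the Feng and Hu machinery behind Theorem~\ref{tFH1} provides. It then suffices to show $\underline E(\mu)\ge\min\{1,h_\mu/\chi\}$.

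To get at this growth I would exploit the self-similar scaling relation $\mu=\sum_{|\ii|=n}p_\ii\,\psi_\ii\mu$. Writing $\theta_n$ for the $\mathbb P$-distribution of the base points $\psi_\ii(0)$, $|\ii|=n$, and using that by the law of large numbers a full-measure set of words has contraction ratio $\approx e^{-\chi n}$, the measure $\mu$ is, at scale $e^{-\chi n}$, comparable to the convolution $\theta_n*(e^{-\chi n}\cdot\mu)$. Suppose, for contradiction, that $\underline E(\mu)$ were strictly smaller than $\min\{1,h_\mu/\chi\}$. Then iterating the scaling relation and averaging the scale-to-scale entropy increments would exhibit a genuine deficit: at a positive-density set of scales, convolving in one further generation adds negligible entropy, i.e. $H(\theta_n*\rho,\mathcal D_N)\le H(\rho,\mathcal D_N)+\delta N$ with $\delta$ arbitrarily small, where $\rho$ is the rescaled copy of $\mu$.

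The decisive step, which I expect to be the main obstacle, is to feed this deficit into Hochman's inverse theorem for the entropy of convolutions on the line \cite{H}, the entropy analogue of the Freiman and Balog--Szemer\'edi--Gowers sumset theorems. Applied with $\rho$ as the base measure and $\theta_n$ as the added measure, it asserts that the almost-additivity $H(\theta_n*\rho,\mathcal D_N)\le H(\rho,\mathcal D_N)+\delta N$ forces, at all but an $\varepsilon$-fraction of the scales $k\le N$, either $\rho$ to be $\varepsilon$-close to uniform or $\theta_n$ to be $\varepsilon$-atomic. Since we have assumed $\underline E(\mu)<\min\{1,h_\mu/\chi\}\le1$, the rescaled copy $\rho$ is bounded away from uniform on a positive proportion of scales, so there the first alternative is impossible and the deficit must be absorbed by $\theta_n$ concentrating. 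Letting $\delta\to0$ then forces the base points $\{\psi_\ii(0):|\ii|=n\}$ to cluster within $e^{-Cn}$ for every $C$, so that $\Delta_n$ decays faster than any exponential $\varepsilon^n$, contradicting the Hochman-condition. Hence no deficit is possible and $\underline E(\mu)=\min\{1,h_\mu/\chi\}$, giving the required lower bound. I would close by noting that exponential separation also prevents distinct level-$n$ words from defining the same map, so the random-walk entropy of the step distribution equals the Shannon entropy $h_\mu=-\sum_{i=1}^M p_i\log p_i$, which is why the Bernoulli entropy appears in the statement.
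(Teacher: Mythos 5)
First, a point of comparison: the paper contains no proof of this statement at all --- Theorem~\ref{tHochman} is imported verbatim from Hochman \cite[Theorem~1.1]{H}. So the only meaningful benchmark is Hochman's original argument, and your sketch does reproduce its architecture faithfully: exact dimensionality of self-similar measures (via the Feng--Hu machinery), the scale-by-scale entropy analysis of the convolution structure $\mu\approx\theta_n\ast(\text{rescaled }\mu)$, the inverse theorem for entropy of convolutions, and the final clash with exponential separation. As a standalone proof, however, two steps you gloss over are genuinely load-bearing. The identity $\mu\approx\theta_n\ast(e^{-\chi n}\cdot\mu)$ at scale $e^{-\chi n}$ is exact only for homogeneous systems; with distinct ratios $r_i$ one must pass to a stopping-time decomposition into words of comparable contraction, and even then the maps $\psi_{\il}$ do not share a derivative, so $\mu$ is a pushforward of a measure on the affine group rather than a Euclidean convolution. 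This is not cosmetic: the Hochman-condition declares $\Delta(\il,\jl)=\infty$ whenever $f_{\il}'(0)\neq f_{\jl}'(0)$, so distinct words with distinct derivatives are permitted to have coinciding base points $\psi_{\il}(0)$. Hence ``the base points cluster within $e^{-Cn}$'' is not yet a contradiction with the Hochman-condition; you need clustering \emph{within a single derivative class}, which requires splitting $\theta_n$ according to the contraction ratio before applying the inverse theorem, as Hochman does.

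Second, your contradiction never visibly uses the hypothesis $\underline{E}(\mu)<h_\mu/\chi$ --- only $\underline{E}(\mu)<1$, which rules out the ``$\rho$ nearly uniform'' branch. The missing entropy count is exactly where $h_\mu/\chi$ enters: separation forces $H(\theta_n,\mathcal{D}_{qn})=h_\mu n+o(n)$ once $2^{-qn}<\varepsilon^n$ (within a derivative class, using that the Hochman-condition excludes exact coincidences of maps, as you correctly note at the end), while the inverse-theorem saturation argument caps $H(\theta_n,\mathcal{D}_{qn})$ at roughly $\alpha\chi n/\log 2+o(n)$ with $\alpha=\underline{E}(\mu)$; it is the strict inequality $\alpha\chi<h_\mu$ that makes these two bounds clash and, by pigeonhole, produces distinct words whose maps are $e^{-Cn}$-close for every $C$. ``Letting $\delta\to 0$ forces the base points to cluster'' elides precisely this counting. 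A minor further slip: with overlaps the $\mu$-mass of a level-$n$ cylinder interval is only bounded \emph{below} by $p_{\il}$, not comparable to $e^{-h_\mu n}$; fortunately the lower bound on ball mass is the direction the upper estimate $\dim_H\mu\leq h_\mu/\chi$ actually needs, so that part survives as stated.
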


On the other hand, let us introduce the so-called conditional measures. Let $m$ be a Borel probability measure on $[0,1]^d$ and $\Xi$ a measurable partition of $[0,1]^d$. Let $\eta:[0,1]^d\mapsto\Xi$ be the map associating to each $x\in[0,1]^d$ the atom $\xi\in\Xi$ that contains $x$. By definition, $Q$ is a measurable subset of $\Xi$ if and only if $\eta^{-1}Q$ is a measurable subset of $[0,1]^d$. Let $\widehat{m}$ be the push-forward of $m$ under $\eta$, in other words, $\widehat{m}(Q)=m(\eta^{-1}Q)$ for every measurable set $Q\subseteq\Xi$. A system of conditional measures of $m$ with respect to $\Xi$ is a family $(m_{\xi})_{\xi\in\Xi}$ of probability measures on $[0,1]^d$ such that $m_{\xi}(\xi)=1$ for $\widehat{m}$-almost every $\xi\in\Xi$ given any measurable $h:[0,1]^d\mapsto\R$, the function $\xi\mapsto\int h(x)dm_{\xi}(x)$ is measurable and $\int h(x)dm(x)=\iint h(x)dm_{\xi}(x)d\widehat{m}(\xi)$. According to the classical result of Rokhlin, for every measurable partition there exists a system of conditional measures and it is uniquely defined except on a set of zero measure.

Let us assume that the maps of the IFS $\Psi=\left\{\psi_i:[0,1]^d\mapsto[0,1]^d\right\}_{i=1}^M$ have the form
$$
\psi_i(x_1,\dots,x_d)=(\rho_{1,i}x_1+t_{1,i},\dots,\rho_{d,i}x_d+t_{d,i}).
$$
For a $\mathbb{P}=\left\{p_1,\dots,p_M\right\}^{\N}$ Bernoulli measure, denote the Lyapunov exponents by $\chi_j=-\sum_{i=1}^Mp_i\log|\rho_{j,i}|$. Without loss of generality we may assume that $0<\chi_1\leq\chi_2\leq\cdots\leq\chi_d$. Let $\Psi_k$ be the IFS with functions restricted to the first $k$ coordinates, i.e. $\Psi_k=\left\{\psi_i^k:[0,1]^k\mapsto[0,1]^k\right\}_{i=1}^M$, where
$$
\psi_i^k(x_1,\dots,x_k)=\left\{(\rho_{1,i}x_1+t_{1,i},\dots,\rho_{k,i}x_k+t_{k,i})\right\}_{i=1}^M.
$$
Denote the natural projection w.r.t $\Psi_k$ by $\Pi_k$. Moreover, let $\mu_k=\mathbb{P}\circ\Pi_k^{-1}$.

\begin{theorem}\cite[Theorem~2.11]{FH}\label{tFH2}
	For every $1\leq k\leq d$,
	$$
	\dim_H\mu_k=\frac{h_{\Pi_1}(\mathbb{P})}{\chi_1}+\sum_{j=2}^k\frac{h_{\Pi_j}(\mathbb{P})-h_{\Pi_{j-1}}(\mathbb{P})}{\chi_j}.
	$$
\end{theorem}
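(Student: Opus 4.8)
The plan is to prove the formula by induction on $k$, following the Ledrappier--Young philosophy of slicing the measure along the coordinate filtration and summing the contributions of the successive Lyapunov directions. The base case $k=1$ is precisely Theorem~\ref{tFH1}, which gives $\dim_H\mu_1=h_{\Pi_1}(\mathbb{P})/\chi_1$. For the inductive step I would introduce the coordinate-forgetting projection $\pi_k\colon[0,1]^k\to[0,1]^{k-1}$, $(x_1,\dots,x_k)\mapsto(x_1,\dots,x_{k-1})$, so that $\pi_k\circ\Pi_k=\Pi_{k-1}$ and hence $\mu_{k-1}=\mu_k\circ\pi_k^{-1}$. Disintegrating $\mu_k$ over $\mu_{k-1}$ along $\pi_k$ produces a system of conditional measures $(\mu_{k,y})_y$ supported on the vertical fibres $\{y\}\times[0,1]$, which record exactly the new information carried by the $k$-th coordinate.

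The heart of the argument is a dimension-conservation statement. After first establishing (following Feng--Hu) that $\mu_k$ is exact dimensional, so that local dimensions exist and are a.e.\ constant, I would show that the local dimension splits as
\[
\dim_H\mu_k=\dim_H\mu_{k-1}+\delta_k,
\]
where $\delta_k$ is the almost-sure dimension of the fibre measures $\mu_{k,y}$. To evaluate $\delta_k$ I would apply the ergodic theorem to pin down the geometry of cylinders: a level-$n$ cylinder maps under $\Pi_k$ to a box whose $j$-th side length is comparable to $e^{-n\chi_j}$. Since the ordering $\chi_1\le\cdots\le\chi_k$ forces the $k$-th direction to be the most strongly contracted, the fibre measure in the last coordinate behaves like a self-similar measure on the line with Lyapunov exponent $\chi_k$ and entropy equal to the gain in projection entropy $h_{\Pi_k}(\mathbb{P})-h_{\Pi_{k-1}}(\mathbb{P})$; a conditional version of Theorem~\ref{tFH1} then yields $\delta_k=(h_{\Pi_k}(\mathbb{P})-h_{\Pi_{k-1}}(\mathbb{P}))/\chi_k$. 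Combining this with the inductive hypothesis for $\dim_H\mu_{k-1}$ telescopes to the claimed formula.

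Two steps I expect to be delicate. The first is the identification of the fibre entropy with the projection-entropy increment $h_{\Pi_k}-h_{\Pi_{k-1}}$: this is a computation with Rokhlin conditional entropies, unwinding the definition $h_{\Pi_j}(\mathbb{P})=H_{\mathbb{P}}(\mathcal{P}\mid\sigma^{-1}\Pi_j^{-1}\mathcal{B})-H_{\mathbb{P}}(\mathcal{P}\mid\Pi_j^{-1}\mathcal{B})$ and exploiting the increasing $\sigma$-algebra structure of the tower $\Pi_1,\dots,\Pi_k$. The second, and genuinely harder, step is the dimension-conservation equality itself. The upper bound $\dim_H\mu_k\le\dim_H\mu_{k-1}+\delta_k$ is comparatively soft, but the lower bound requires approximating balls $B_r(x)$ in $[0,1]^k$ by ``approximate squares'' assembled from cylinders cut off at coordinate-dependent depths $n_j\approx\log(1/r)/\chi_j$. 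Here the ordering $\chi_1\le\cdots\le\chi_k$ is essential: it guarantees $n_1\ge\cdots\ge n_k$, so the approximate squares are genuinely nested and the measure of a ball factors, up to subexponential errors, as the base measure times the fibre measure. Controlling these errors uniformly, so that the product structure survives passage to the local-dimension limit, is the main obstacle, and is exactly where the exact dimensionality of the conditional measures and a Shannon--McMillan--Breiman estimate along the filtration are needed.
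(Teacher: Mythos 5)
Your sketch follows essentially the same route as the actual proof of this statement, which the paper does not reprove but simply cites from Feng and Hu \cite[Theorem~2.11]{FH}: induction along the coordinate filtration, disintegration of $\mu_k$ over $\proj_k$, exact dimensionality, and dimension conservation with fibre contribution $\bigl(h_{\Pi_k}(\mathbb{P})-h_{\Pi_{k-1}}(\mathbb{P})\bigr)/\chi_k$ --- the latter being precisely the conditional-measure result the paper quotes separately as Theorem~\ref{tFH3}. You also correctly isolate the two genuinely delicate points (identifying the fibre entropy with the projection-entropy increment, and the lower bound via approximate squares cut at depths $n_j\approx\log(1/r)/\chi_j$, where the ordering $\chi_1\leq\cdots\leq\chi_k$ is essential), so there is nothing of substance to correct.
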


Let us denote the orthogonal projection from $[0,1]^{k}$ to $[0,1]^{k-1}$ by $\proj_{k}$, that is $\proj_{k}(x_1,\dots,x_{k})=(x_1,\dots,x_{k-1})$. Moreover, let us denote the partition given by the inverse slices by $\xi^k$, i.e. $\xi^k(x_1,\dots,x_k)=\proj_k^{-1}(x_1,\dots,x_{k-1})$.

\begin{theorem}\cite[Corollary~4.16, Theorem~6.2]{FH}\label{tFH3}
	For every $2\leq k\leq d$ and $\mu_{k}$-a.e. $\xv=(x_1,\dots,x_k)$
	$$
	\dim_H(\mu_{k})_{\xv}^{\xi^k}=\frac{h_{\Pi_{k}}(\mathbb{P})-h_{\Pi_{k-1}}(\mathbb{P})}{\chi_{k}},
	$$
	where $(\mu_{k})_{\xv}^{\xi^k}$ is the conditional measure on the partition element $\xi^{k}(\xv)$ w.r.t $\mu^{k}$. Moreover, if $\Psi_k$ satisfies the strong separation condition ($\psi^k_i([0,1]^k)\cap\psi^k_j([0,1]^k)=\emptyset$ for every $i\neq j$) then $h_{\Pi_k}(\mathbb{P})=h_{\mathbb{P}}=-\sum_{i=1}^Mp_i\log p_i$.
\end{theorem}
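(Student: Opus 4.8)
The plan is to prove this as a relative Ledrappier--Young formula by reducing to the skew-product generated by the flag $\Pi_{k-1}\le\Pi_k$ and then computing the local dimension of the fibre measures directly. The orthogonal projection $\proj_k$ semiconjugates $\Psi_k$ on $[0,1]^k$ to $\Psi_{k-1}$ on $[0,1]^{k-1}$, so that $\Pi_{k-1}=\proj_k\circ\Pi_k$, while the $k$th coordinate of each $\psi_i^k$ acts on the fibre $\xi^k(\xv)=\{(x_1,\dots,x_{k-1})\}\times[0,1]$ by the single affine contraction $t\mapsto\rho_{k,i}t+t_{k,i}$. Disintegrating $\mu_k$ over $\mu_{k-1}$ (Rokhlin) produces the family $(\mu_k)_{\xv}^{\xi^k}$, and the self-affine identity $\mu_k=\sum_i p_i\,\mu_k\circ(\psi_i^k)^{-1}$ descends to a consistency relation for these fibre measures under the maps $t\mapsto\rho_{k,i}t+t_{k,i}$. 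The first task is therefore to show that this fibre family is essentially invariant under the transverse dynamics, so that by ergodicity of $\mathbb{P}$ under $\sigma$ the value $\dim_H(\mu_k)_{\xv}^{\xi^k}$ is $\mu_k$-almost everywhere a single constant, and that the conditional measures are exact dimensional (this is the role of the Feng--Hu exact-dimensionality statement, Theorem~6.2).

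To identify the constant I would compute the fibre local dimension $\lim_{r\to0}\log(\mu_k)_{\xv}^{\xi^k}(B_r(\xv))/\log r$ along a $\mathbb{P}$-typical point. At symbolic time $n$ the relevant scale on the fibre is $r_n=|\rho_{k,i_0}\cdots\rho_{k,i_{n-1}}|$, and Birkhoff's ergodic theorem gives $-\tfrac1n\log r_n\to\chi_k$. The numerator is the transverse information gained per symbol once the first $k-1$ coordinates are fixed; by the defining formula for projection entropy and its chain rule along the flag (Feng--Hu, Corollary~4.16) this increment is exactly $h_{\Pi_k}(\mathbb{P})-h_{\Pi_{k-1}}(\mathbb{P})$. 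Dividing the symbolic rate by the geometric contraction rate $\chi_k$ yields the claimed value $\bigl(h_{\Pi_k}(\mathbb{P})-h_{\Pi_{k-1}}(\mathbb{P})\bigr)/\chi_k$; as a consistency check this is precisely $\dim_H\mu_k-\dim_H\mu_{k-1}$ from Theorem~\ref{tFH2}, the two statements being linked by dimension conservation along $\proj_k$.

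I expect the main obstacle to be the rigorous passage from this symbolic conditional entropy to the geometric local dimension on the fibre in the presence of overlaps. Without separation several cylinders $\psi_{i_0\cdots i_{n-1}}^k([0,1]^k)$ can contribute mass to the same fibre neighbourhood $B_{r_n}(\xv)$, so the fibre measure of a cylinder interval need not be comparable to the naive weight $p_{i_0}\cdots p_{i_{n-1}}$; one must instead show it is governed by the conditional law of the code given $\Pi_{k-1}^{-1}\mathcal{B}$, which is exactly the bookkeeping carried by projection entropy. The upper bound for the local dimension is the routine direction, obtained by covering the fibre with the natural cylinder intervals of length $r_n$; the lower bound is delicate and is where the disintegration estimates and the exact-dimensionality machinery of Feng--Hu are essential. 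The ordering $\chi_k=\max\{\chi_1,\dots,\chi_k\}$ makes the fibre the finest of the coordinate directions at each scale, which is what lets the cylinder boxes be treated as products of a base piece and a fibre interval in this estimate.

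Finally, in the separated case, if $\Psi_k$ satisfies the SSC then $\Pi_k$ is injective off a $\mathbb{P}$-null set, so $\Pi_k^{-1}\mathcal{B}$ coincides with the full Borel $\sigma$-algebra of $\Sigma$ modulo $\mathbb{P}$-null sets; hence $H_{\mathbb{P}}(\mathcal{P}\mid\Pi_k^{-1}\mathcal{B})=0$. Since $\mathbb{P}$ is Bernoulli the zeroth coordinate is independent of $(i_1,i_2,\dots)$, so $H_{\mathbb{P}}(\mathcal{P}\mid\sigma^{-1}\Pi_k^{-1}\mathcal{B})=H_{\mathbb{P}}(\mathcal{P})=-\sum_{i=1}^M p_i\log p_i$. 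Substituting these two values into the definition $h_{\Pi_k}(\mathbb{P})=H_{\mathbb{P}}(\mathcal{P}\mid\sigma^{-1}\Pi_k^{-1}\mathcal{B})-H_{\mathbb{P}}(\mathcal{P}\mid\Pi_k^{-1}\mathcal{B})$ gives $h_{\Pi_k}(\mathbb{P})=h_{\mathbb{P}}$, as claimed.
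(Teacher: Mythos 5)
This statement enters the paper as a quoted result of Feng and Hu \cite{FH}; the paper contains no proof of it, so there is no internal argument to compare against, and the right benchmark is the proof in \cite{FH} itself. Measured against that, your outline is faithful: disintegration of $\mu_k$ over $\mu_{k-1}$ along $\proj_k$, almost-everywhere constancy and exact dimensionality of the fibre measures, and identification of the constant as the conditional-entropy increment divided by $\chi_k$ is indeed the Ledrappier--Young-type route Feng and Hu take, and you correctly locate the real difficulty in the passage from symbolic conditional entropy to geometric local dimension on the fibre when cylinders overlap. One caveat: your identification step invokes ``Feng--Hu, Corollary~4.16'' to pin down the increment as $h_{\Pi_k}(\mathbb{P})-h_{\Pi_{k-1}}(\mathbb{P})$, which is circular here, since that corollary is precisely (part of) the statement under discussion; as written, the first half of your argument is a plan that delegates its hardest step --- the lower bound for the fibre local dimension, handled in \cite{FH} by martingale and density arguments --- to the cited machinery, rather than an independent proof. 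By contrast, your proof of the SSC addendum is complete and correct, and is in fact slightly cleaner than a bare citation: under the strong separation condition $\Pi_k$ is injective on all of $\Sigma$ (a continuous bijection from a compact space, hence a homeomorphism onto the attractor), so $\Pi_k^{-1}\mathcal{B}$ is the full Borel $\sigma$-algebra of $\Sigma$ and $H_{\mathbb{P}}(\mathcal{P}\mid\Pi_k^{-1}\mathcal{B})=0$, while $\sigma^{-1}\Pi_k^{-1}\mathcal{B}$ is generated by the coordinates $(i_1,i_2,\dots)$, which under the Bernoulli measure are independent of $\mathcal{P}$, so $H_{\mathbb{P}}(\mathcal{P}\mid\sigma^{-1}\Pi_k^{-1}\mathcal{B})=-\sum_{i=1}^Mp_i\log p_i$ and hence $h_{\Pi_k}(\mathbb{P})=h_{\mathbb{P}}$, exactly as claimed.
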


\section{Proof of Theorem A}

Let $\pi, \pi_{\alpha}$ and $\pi_{\beta}$ be the natural projections from the symbolic space $\Sigma$ to $\Lambda, \Lambda_{\alpha}$ and $\Lambda_{\beta}$ w.r.t IFSs $\Phi, \Phi_{\alpha}$ and $\Phi_{\beta}$ defined in \eqref{eIFS1} and \eqref{eprojIFS}. That is, for a $\ii=(i_0,i_1,\dots)\in\Sigma$
$$\pi_{\alpha}(\ii)=\sum_{n=0}^{\infty}t_{i_n,1}\alpha_{i_0}\cdots\alpha_{i_{n-1}},\ \pi_{\beta}(\ii)=\sum_{n=0}^{\infty}t_{i_n,2}\beta_{i_0}\cdots\beta_{i_{n-1}}\text{ and }\pi(\ii)=(\pi_{\alpha}(\ii),\pi_{\beta}(\ii)).$$

If $\mathbb{P}=\left\{p_1,\dots,p_m\right\}^{\N}$ is the Bernoulli measure on $\Sigma$ then the self-affine measure $\mu$ is the push-down measure $\mathbb{P}$ by $\pi$, that is, $\mu=\pi_*\mathbb{P}=\mathbb{P}\circ\pi^{-1}$. Define two self-similar measures of $\Phi_{\alpha}$ and $\Phi_{\beta}$ by $\mu_{\alpha}=(\pi_{\alpha})_*\mathbb{P}$ and $\mu_{\beta}=(\pi_{\beta})_*\mathbb{P}$ respectively. If it is not confusing, we denote the projected entropies by $h_{\pi_{\alpha}}:=h_{\pi_{\alpha}}(\mathbb{P}), h_{\pi_{\beta}}:=h_{\pi_{\beta}}(\mathbb{P})\text{ and }h_{\pi}:=h_{\pi}(\mathbb{P})$.

\begin{proof}[Proof of Theorem A(1)]
By Theorem~\ref{tHochman}, $\dim_H\mu_{\alpha}=h_{\mu}/\ly{\alpha}$. Since $\mu_{\alpha}$ is the orthogonal projection of $\mu$, we get $h_{\mu}/\ly{\alpha}\leq\dim_H\mu$. The upper bound  $\dim_H\mu\leq h_{\mu}/\ly{\alpha}$ is trivial.
\end{proof}

\begin{proof}[Proof of Theorem A(2)]
Let us define a lifted IFS on $[0,1]^3$ and a derived IFS on $\{0\}\times[0,1]^2$, as follows
\[
\widehat{\Phi}:=\left\{\widehat{S}_i(x,y,z)=(\alpha_ix,\beta_iy,\rho z)+(t_{i,1},t_{i,2},t_{i,3})\right\}_{i=1}^m\text{ and }
\]
\[
\widetilde{\Phi}:=\left\{\widetilde{S}_i(y,z)=(\beta_iy,\rho z)+(t_{i,2},t_{i,3})\right\}_{i=1}^m,
\]
where $0<\rho<\min_i\left\{|\alpha_i|,|\beta_i|\right\}$ and $t_{i,3}\in\R$ are chosen such that
\begin{equation}\label{eseplift}
\widehat{S}_i([0,1]^3)\cap\widehat{S}_j([0,1]^3)=\emptyset\text{ and }\widetilde{S}_i([0,1]^2)\cap\widetilde{S}_j([0,1]^2)=\emptyset\text{ for every }i\neq j.
\end{equation}
Denote the natural projections of $\widehat{\Phi}$ and $\widetilde{\Phi}$ by $\widehat{\pi}$ and $\widetilde{\pi}$ respectively. Let us define $\widehat{\mu}=\widehat{\pi}_*\mathbb{P}$ and $\widetilde{\mu}=\widetilde{\pi}_*\mathbb{P}$ the push-down measures. We denote the projected entropies by $h_{\widehat{\pi}}:=h_{\widehat{\pi}}(\mathbb{P})$ and $h_{\widetilde{\pi}}:=h_{\widetilde{\pi}}(\mathbb{P})$.

We note that the Lyapunov exponents coincide for every measure $\widehat{\mu}, \widetilde{\mu}$, and $\mu$ for the appropriate directions. Applying Theorem~\ref{tFH2}, we have
\begin{eqnarray*}
\dim_H\mu&=&\frac{h_{\pi_{\alpha}}}{\ly{\alpha}}+\frac{h_{\pi}-h_{\pi_{\alpha}}}{\ly{\beta}},\\
\dim_H\widehat{\mu}&=&\frac{h_{\pi_{\alpha}}}{\ly{\alpha}}+\frac{h_{\pi}-h_{\pi_{\alpha}}}{\ly{\beta}}+\frac{h_{\widehat{\pi}}-h_{\pi}}{-\log\rho},\\
\dim_H\widetilde{\mu}&=&\frac{h_{\pi_{\beta}}}{\ly{\beta}}+\frac{h_{\widetilde{\pi}}-h_{\pi_{\beta}}}{-\log\rho}.
\end{eqnarray*}
Since $\widehat{\Phi}$ and $\widetilde{\Phi}$ satisfy the strong separation condition \eqref{eseplift}, applying Theorem~\ref{tFH3}, we get that $h_{\widetilde{\pi}}=h_{\mu}=h_{\widehat{\pi}}$.

Let us introduce measurable partitions of $[0,1]^3$ by $\xi(x,y):=\{x\}\times\{y\}\times[0,1]$ and $\tau(y):=[0,1]\times\left\{y\right\}\times[0,1]$. Moreover, define a measurable partition of $\{0\}\times[0,1]^2$ by $\zeta(y)=\{0\}\times y\times[0,1]$ and a measurable partition of $[0,1]^2\times\{0\}$ by $\eta(y)=[0,1]\times\{y\}\times\{0\}$. For a visualisation, see Figure~\ref{flift}.

  \begin{figure}[t]
	\centering
	\includegraphics[width=90mm]{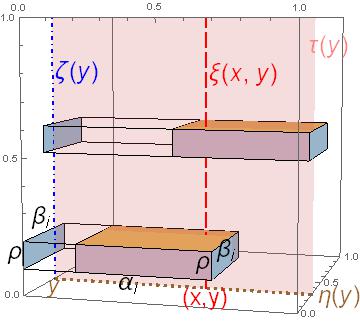}\\
	\caption{The lifted IFS and the visualisation of partitions $\xi$, $\tau$, $\eta$ and $\zeta$.}\label{flift}
\end{figure}

By Rokhlin's Theorem there are families of conditional measures $\widehat{\mu}^{\xi}_{x,y}$, $\widehat{\mu}^{\tau}_{y}$, $\widetilde{\mu}^{\zeta}_y$ and $\mu^{\eta}_{y}$ on the partitions respectively, uniquely defined up to zero measure sets.

By definition of conditional measures and the partition $\tau$, $\widehat{\mu}=\int\widehat{\mu}^{\tau}_yd\mu_{\beta}(y)$. On the other hand,
$\widehat{\mu}=\int\widehat{\mu}^{\xi}_{x,y}d\mu(x,y)=\iint\widehat{\mu}^{\xi}_{x,y}d\mu^{\eta}_y(x)d\mu_{\beta}(y)$. Thus, $$\widehat{\mu}^{\tau}_y=\int\widehat{\mu}^{\xi}_{x,y}d\mu^{\eta}_y(x)\text{ for $\mu_{\beta}$-a.e. $y$.}$$

Let $\proj:[0,1]^3\mapsto\{0\}\times[0,1]^2$ be the orthogonal projection to the $y,z$-coordinate plane. Since $(\proj)_*\widehat{\mu}^{\tau}_y=\widetilde{\mu}_y^{\zeta}$ for $\mu_{\beta}$-a.e. $y$, we get that
\begin{equation}\label{econdmeasures}
	\widetilde{\mu}_y^{\zeta}=\int(\proj)_*\widehat{\mu}^{\xi}_{x,y}d\mu^{\eta}_y(x)\text{ for $\mu_{\beta}$-a.e. $y$.}
\end{equation}


Applying Theorem~\ref{tFH3} we have
\begin{eqnarray*}
&&\dim_H\widehat{\mu}^{\xi}_{x,y}=\frac{h_{\mu}-h_{\pi}}{-\log\rho}\text{ for $\mu$-a.e. $(x,y)$}\\
&&\dim_H\widetilde{\mu}^{\zeta}_y=\frac{h_{\mu}-h_{\pi_{\beta}}}{-\log\rho}\text{ for $\mu_{\beta}$-a.e. $y$.}
\end{eqnarray*}

Using Theorem~\ref{tFH1} and Theorem~\ref{tHochman}, we have that
\[
\dim_H\mu_{\beta}=\frac{h_{\pi_{\beta}}}{\ly{\beta}}=\frac{h_{\mu}}{\ly{\beta}}.
\]
Thus, $h_{\pi_{\beta}}=h_{\mu}$. Therefore $\dim_H\widetilde{\mu}^{\zeta}_y=0$ for $\mu_{\beta}$-a.e. $y$.

By \eqref{econdmeasures}, if $\widetilde{\mu}_y^{\zeta}(R)=0$ for a Borel set $R\subseteq\{0\}\times\{y\}\times[0,1]$ then $(\proj)_*\widehat{\mu}^{\xi}_{x,y}(R)=0$ for $\mu^{\eta}_y$-a.e $x$. Thus, by the definition of the Hausdorff dimension $\dim_H\widetilde{\mu}_y^{\zeta}\geq\dim_H(\proj)_*\widehat{\mu}^{\xi}_{x,y}=\dim_H\widehat{\mu}^{\xi}_{x,y}$ for $\mu$-a.e $(x,y)$. Hence $\dim_H\widehat{\mu}^{\xi}_{x,y}=0$ for $\mu$-a.e. $(x,y)$, which implies that $h_{\pi}=h_{\mu}$.

Again using Theorem~\ref{tFH1} and Theorem~\ref{tHochman},
\[
\dim_H\mu_{\alpha}=\frac{h_{\pi_{\alpha}}}{\ly{\alpha}}=1.
\]
Hence, $h_{\pi_{\alpha}}=\ly{\alpha}$. Therefore
\[
\dim_H\mu=\frac{h_{\pi_{\alpha}}}{\chi_{\alpha}}+\frac{h_{\pi}-h_{\pi_{\alpha}}}{\chi_{\beta}}=1+\frac{h_{\mu}-\ly{\alpha}}{\ly{\beta}}.
\]
\end{proof}

\section{Proof of Theorem B}\label{sdimset}

Let us define the pressure function $P(t)$ with respect to $\Phi$ of the form \eqref{eIFS1} in the following way
\begin{equation}\label{esap}
P_{\Phi}(t)=\left\{\begin{array}{cc}
       \max\left\{\sum_{i=1}^m|\alpha_i|^t,\sum_{i=1}^m|\beta_i|^t\right\} & \text{if $0\leq t<1$} \\
       \max\left\{\sum_{i=1}^m|\alpha_i||\beta_i|^{t-1},\sum_{i=1}^m|\beta_i||\alpha_i|^{t-1}\right\} & \text{if $1\leq t<2$} \\
       \sum_{i=1}^m(|\alpha_i||\beta_i|)^{t/2} & \text{if $t\geq2$.}
     \end{array}\right.
\end{equation}

Using \cite[Theorem~2.5]{FM} and \cite[Proposition~5.1]{F} we get that
\begin{equation}\label{eboxupper}
\overline{\dim}_B\Lambda\leq t_0,
\end{equation}
where $t_0$ is the unique solution of the equation $P_{\Phi}(t_0)=1$.

\begin{proof}[Proof of Theorem B(1)] 
Let $\mathbb{P}:=\left\{|\alpha_1|^{s_{\alpha}},\dots,|\alpha_m|^{s_{\alpha}}\right\}^{\N}$ be a Bernoulli measure and let $\mu$ be the corresponding self-affine measure. By Theorem~\ref{tHochman}, $\dim_H\mu_{\alpha}=h_{\mu}/\ly{\alpha}=s_{\alpha}$. Since $\mu_{\alpha}$ is the orthogonal projection of $\mu$, we get $s_{\alpha}\leq\dim_H\mu\leq\dim_H\Lambda$. The upper bound  $\overline{\dim}_B\Lambda\leq s_{\alpha}$ follows by \eqref{eboxupper}.
\end{proof}

\begin{proof}[Proof of Theorem~B(2)]Using \eqref{eboxupper} we have that
\[
\overline{\dim}_B\Lambda\leq d.
\]

Define a Bernoulli measure $\mathbb{P}:=\left\{|\alpha_1||\beta_1|^{d-1},\dots,|\alpha_m||\beta_m|^{d-1}\right\}^{\N}$  on $\Sigma$ and let $\mu$ be the corresponding self-affine measure. We show that $\ly{\alpha}\leq\ly{\beta}$. 

First, let us observe that $h_{\mu}/\chi_{\beta}\leq s_{\beta}$. Indeed, for the IFS~$\Phi_{\beta}$ one can find another IFS of similarities with the same contraction ratios such that it satisfies the open set condition, see e.g. \cite[Proof of Theorem 2.1(b) and (c)]{SS}. Thus, if $\ly{\alpha}>\ly{\beta}$ then 
\[
s_{\beta}\geq\frac{h_{\mu}}{\ly{\beta}}=\frac{\ly{\alpha}}{\ly{\beta}}+d-1>d\geq1,
\]
which is a contradiction.

On the other hand, 
$$
\frac{h_{\mu}}{\ly{\beta}}\leq s_{\beta}\leq1\leq1+(d-1)\frac{\ly{\beta}}{\ly{\alpha}}=\frac{h_{\mu}}{\ly{\alpha}}.
$$

Using Theorem~A(2) we obtain that
\[
d=1+\frac{h_{\mu}-\ly{\alpha}}{\ly{\beta}}=\dim_H\mu\leq\dim_H\Lambda\leq\overline{\dim}_B\Lambda\leq d.
\]
\end{proof}

\section{Proof of Theorem~D}

We recall that for an IFS $\mathcal{G}=\left\{\psi_{i}\right\}_{i=1}^M$ denote the $k$th iterate by $\mathcal{G}^k=\left\{\psi_{i_1}\circ\cdots\circ\psi_{i_k}\right\}_{i_1,\dots,i_k=1}^M$. First, we state a technical lemma.

\begin{lemma}\label{lhomogen}
	Let $\mathcal{G}=\left\{x\mapsto r_ix+t_i\right\}_{i=1}^M$ be an IFS of similarities on the real line and let $\Theta(\mathcal{G})$ be the attractor of $\mathcal{G}$. Then for every $\varepsilon>0$ there exists a $K=K(\varepsilon)>0$ such that for every $k>K$ there is a $\mathcal{F}_k\subseteq\mathcal{G}^k$ such that
	\begin{enumerate}
		\item $f_1'(0)=f_2'(0)$ for any $f_1,f_2\in\mathcal{F}_k$,
		\item $\dim_H\Theta(\mathcal{G})-\varepsilon\leq\dim_H\Theta(\mathcal{F}_k)$, where $\Theta(\mathcal{F}_k)$ is the attractor of IFS $\mathcal{F}_k$,
		\item $\mathcal{F}_k$ satisfies the SSC, i.e. $f_1(\Theta(\mathcal{F}_k))\cap f_2(\Theta(\mathcal{F}_k))=\emptyset$ for any $f_1\neq f_2\in\mathcal{F}_k$.
	\end{enumerate}
\end{lemma}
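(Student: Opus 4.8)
The plan is to prove the slightly stronger statement in which the selected maps have pairwise disjoint \emph{interval} images $f([0,1])$, since this strong form of separation is preserved under composition, iteration and restriction, and it trivially implies the SSC required in (3). Write $s:=\dim_H\Theta(\mathcal{G})$ and normalise so that the convex hull of $\Theta(\mathcal{G})$ is $[0,1]$; let $\pi$ be the natural projection of $\mathcal{G}$. I would use the classical fact that a self-similar set on the line satisfies $\dim_H\Theta(\mathcal{G})=\dim_B\Theta(\mathcal{G})$, so that a maximal $3\delta$-separated subset of $\Theta(\mathcal{G})$ has cardinality at least $\delta^{-(s-\varepsilon)}$ for all small $\delta$.

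First I would produce a separated, nearly full-dimensional (but inhomogeneous) subsystem, following Peres--Shmerkin. Fix $\delta$ small and choose $3\delta$-separated points $x_1,\dots,x_P\in\Theta(\mathcal{G})$ with $P\ge\delta^{-(s-\varepsilon)}$. Each $x_p=\pi(\omega)$ for some code $\omega$; let $v_p$ be the shortest prefix whose cylinder interval $I_p:=\psi_{v_p}([0,1])$ has diameter $<\delta$, so $\mathrm{diam}(I_p)\in(r_{\min}\delta,\delta)$ and $x_p\in I_p$. Two intervals of diameter $<\delta$ around points more than $3\delta$ apart are disjoint, hence $\mathcal{E}:=\{\psi_{v_p}\}_{p=1}^P$ has pairwise disjoint interval images. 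Thus $\mathcal{E}$ satisfies the SSC, its ratios lie in $(r_{\min}\delta,\delta)$, and its similarity dimension $d_0$ (equal to $\dim_H\Theta(\mathcal{E})$ by the SSC) satisfies $d_0\ge\log P/\log(1/(r_{\min}\delta))\ge s-2\varepsilon$ once $\delta$ is small.

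Next I would homogenise by passing to a single \emph{type} inside a high iterate of $\mathcal{E}$. Over the alphabet $\mathcal{E}=\{g_1,\dots,g_P\}$ with ratios $\rho_1,\dots,\rho_P$, all words of a prescribed type (prescribed multiplicity $m_j$ of each $g_j$) have the common ratio $\prod_j\rho_j^{m_j}$, so they share a common derivative; and all length-$N$ words over $\mathcal{E}$ have pairwise disjoint interval images (two such words first differ in some coordinate, where the two $\mathcal{E}$-intervals are already disjoint and the common prefix map is injective), so the SSC persists. Taking $m_j=\lfloor q_jN\rfloor$ with $q_j$ close to the optimal weights $\rho_j^{d_0}$ (which form a probability vector since $\sum_j\rho_j^{d_0}=1$), the number of words is $e^{N(h(q)+o(1))}$ and the common ratio is $e^{-N\sum_j q_j\log(1/\rho_j)}$, so the resulting homogeneous SSC subsystem $\mathcal{F}$ has dimension $h(q)/\sum_j q_j\log(1/\rho_j)+o(1)\to d_0$. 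Choosing $q$ rational and $N$ large gives $\dim_H\Theta(\mathcal{F})>s-3\varepsilon$, with $\mathcal{F}\subseteq\mathcal{G}^{k_0}$ for the single length $k_0=\sum_j m_jn_j$, where $n_j$ is the length of the block $g_j$.

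The last, and fiddliest, point is to realise such a family inside $\mathcal{G}^{k}$ for \emph{every} $k>K$, not merely for the sparse set of lengths produced above. Here I would pad. As $N$ ranges, the admissible lengths $k_0$ form a set with bounded gaps (at most $\max_j n_j$), so for each large $k$ there is an admissible $k_0\le k$ with $k-k_0$ bounded; fixing any word $w_0\in\mathcal{G}^{k-k_0}$, the family $\{w_0\circ f:f\in\mathcal{F}\}\subseteq\mathcal{G}^{k}$ is again homogeneous (common ratio $r_{w_0}\prod_j\rho_j^{m_j}$) and strongly separated (prepending the injective map $w_0$ preserves disjointness of interval images), while its dimension $\log|\mathcal{F}|/\log(1/(r_{w_0}\prod_j\rho_j^{m_j}))$ is multiplied by only $k_0/k=1-O(1/k)\to1$, so it still exceeds $s-4\varepsilon$. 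Taking $K$ large enough that all these estimates hold uniformly yields the lemma. The main obstacle is precisely this simultaneous reconciliation of exact homogeneity, the every-$k$ requirement, and near-optimal dimension: the separated-cylinder extraction (resting on $\dim_B=\dim_H$) supplies the dimension, the single-type restriction supplies the common ratio for free, and bounded padding distributes the construction over all lengths with negligible dimension loss.
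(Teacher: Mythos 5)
Your proof is correct. Note that the paper never writes out a proof of this lemma: it just cites Orponen \cite[Lemma~3.4]{O} and Peres--Shmerkin \cite[Proposition~6]{PS} and omits the argument. Your proposal is, in effect, a self-contained reconstruction of exactly that route: extracting a strongly separated subsystem of nearly full dimension from $3\delta$-separated points and shortest cylinders of diameter about $\delta$ (using $\dim_H=\dim_B$ for self-similar sets) is Peres--Shmerkin's argument, and homogenising by restricting to a single type class of a high iterate is the standard device in both cited papers. Where you go beyond the citations is the final step, and this is genuinely valuable: the lemma as stated requires $\mathcal{F}_k\subseteq\mathcal{G}^k$, i.e.\ all selected maps are words of exactly the same length $k$, and this for \emph{every} $k>K$, whereas Peres--Shmerkin and Orponen produce a homogeneous SSC subsystem made of words of possibly different lengths inside some single iterate. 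Your two observations --- (i) a fixed type class automatically has constant total word length $k_0=\sum_j m_jn_j$, and (ii) one may pad by an arbitrary word $w_0\in\mathcal{G}^{k-k_0}$, which preserves exact homogeneity, preserves the strong (disjoint-interval) separation you propagate throughout, and costs only a factor $1-O(1/k)$ in dimension because $k-k_0$ stays bounded --- are precisely what is needed to upgrade the cited results to the stated form, and they are the part the paper glosses over. Two cosmetic corrections: the contraction ratios may be negative, so the similarity dimension of $\mathcal{E}$ and the optimal weights should be written with $|\rho_j|$ (the type-class construction does yield equal \emph{signed} derivatives, as required in (1)); and the gap bound on the admissible lengths $k_0(N)$ is of order $\sum_j n_j$ (or $D\sum_j q_jn_j$ for rational $q$ with common denominator $D$), not $\max_j n_j$, though any uniform bound suffices for the padding step.
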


The proof is a consequence of Orponen \cite[Lemma~3.4]{O} and Peres and Shmerkin \cite[Proposition~6]{PS}, therefore we omit it.

\begin{lemma}\label{lapproxsys}
	Let $\Phi$ be the IFS of the form \eqref{eIFS1} and let $t_{\Phi}$ be the unique root of the subadditive pressure function $t\mapsto P_{\Phi}(t)$, defined in \eqref{esap}. Then for every $\varepsilon>0$ there exists a $K=K(\varepsilon)$ that for every $k>K$ there is a homogeneous IFS $\Psi_k$ of the form \eqref{ehomoIFS} such that $\Psi_k\subseteq\Phi^k$ and for the root of of the corresponding subadditive function $P_{\Psi_k}(t_{\Psi_k})=0$
	$$|t_{\Phi}-t_{\Psi_k}|<\varepsilon.$$
\end{lemma}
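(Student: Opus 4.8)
The plan is to exploit that the pressure function $P_\Phi$, and hence its root $t_\Phi$, depends only on the contraction ratios $(\alpha_i,\beta_i)$ and not on the translations, so overlaps play no role here. First I would record two reductions. Each branch of $P_\Phi$ is multiplicative under iteration, since $\sum_{\ii\in\{1,\dots,m\}^k}|\alpha_\ii|^t=(\sum_i|\alpha_i|^t)^k$ and likewise for the other summands; hence $P_{\Phi^k}(t)=P_\Phi(t)^k$ and therefore $t_{\Phi^k}=t_\Phi$. Moreover, for any subsystem $\Psi_k\subseteq\Phi^k$ each branch of $P_{\Psi_k}$ is a partial sum of the corresponding branch of $P_{\Phi^k}$, so $P_{\Psi_k}(t)\le P_{\Phi^k}(t)$ for all $t$; as $P_{\Psi_k}$ is strictly decreasing this already gives $t_{\Psi_k}\le t_{\Phi^k}=t_\Phi$. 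Thus it suffices to construct, for large $k$, a homogeneous $\Psi_k\subseteq\Phi^k$ with $t_{\Psi_k}\ge t_\Phi-\varepsilon$.

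For the construction I would use all words of a single prescribed type. Let $\underline{p}=(p_1,\dots,p_m)$ be the Bernoulli equilibrium weights of the dominant branch of $P_\Phi$ at $t=t_\Phi$: explicitly $p_i=|\alpha_i|^{t_\Phi}$ when $t_\Phi<1$, $p_i=|\alpha_i||\beta_i|^{t_\Phi-1}$ when $1\le t_\Phi<2$, and $p_i=(|\alpha_i||\beta_i|)^{t_\Phi/2}$ when $t_\Phi\ge2$, so that $\sum_i p_i=1$. Comparing $h=-\sum_i p_i\log p_i$ with $\chi_\alpha=-\sum_i p_i\log|\alpha_i|$ and $\chi_\beta=-\sum_i p_i\log|\beta_i|$ yields, respectively, $t_\Phi=h/\chi_\alpha$, $\ t_\Phi=1+(h-\chi_\alpha)/\chi_\beta$, or $t_\Phi=2h/(\chi_\alpha+\chi_\beta)$ (the middle identity is precisely the one behind Theorem~B(2)). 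For each $k$ choose integers $n_i$ with $\sum_i n_i=k$ and $n_i/k\to p_i$ (round $kp_i$ and absorb the bounded remainder), and let $\Psi_k$ consist of all $\ii\in\{1,\dots,m\}^k$ in which symbol $i$ occurs exactly $n_i$ times. Since $\alpha_\ii=\prod_i|\alpha_i|^{n_i}$ and $\beta_\ii=\prod_i|\beta_i|^{n_i}$ depend only on the type, every map in $\Psi_k$ shares the common ratios $\alpha=\prod_i|\alpha_i|^{n_i}$ and $\beta=\prod_i|\beta_i|^{n_i}$, so $\Psi_k$ is a homogeneous subsystem of $\Phi^k$ of the form \eqref{ehomoIFS}.

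It then remains to read off $t_{\Psi_k}$ and pass to the limit. The cardinality is the multinomial coefficient $N=\binom{k}{n_1,\dots,n_m}$, and Stirling's formula gives $\tfrac1k\log N=H(n_1/k,\dots,n_m/k)+O(\tfrac{\log k}{k})\to-\sum_i p_i\log p_i=h$, while $\tfrac1k\log|\alpha|=\sum_i\tfrac{n_i}{k}\log|\alpha_i|\to-\chi_\alpha$ and $\tfrac1k\log|\beta|\to-\chi_\beta$. Solving $P_{\Psi_k}(t)=1$ in the relevant branch (for instance $N|\alpha||\beta|^{t-1}=1$ when $1\le t_\Phi<2$) gives $t_{\Psi_k}=1+\tfrac{\log N+\log|\alpha|}{-\log|\beta|}\to 1+\tfrac{h-\chi_\alpha}{\chi_\beta}=t_\Phi$, and analogously $\tfrac{\log N}{-\log|\alpha|}\to h/\chi_\alpha$ or $\tfrac{2\log N}{-\log(|\alpha||\beta|)}\to 2h/(\chi_\alpha+\chi_\beta)$ in the other two regimes. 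Combined with the upper bound $t_{\Psi_k}\le t_\Phi$ this yields $|t_\Phi-t_{\Psi_k}|<\varepsilon$ as soon as $k>K(\varepsilon)$.

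The step I expect to be most delicate is matching the active branch of the piecewise pressure for $\Psi_k$ with that of $\Phi$: solving in the branch $N|\alpha||\beta|^{t-1}$ rather than $N|\beta||\alpha|^{t-1}$ is legitimate only when $|\alpha|\ge|\beta|$, i.e. $\chi_\alpha\le\chi_\beta$ for the equilibrium weights, which is exactly the Lyapunov inequality established in the proof of Theorem~B(2). One must also treat the regime boundaries $t_\Phi=1$ and $t_\Phi=2$, where $t_{\Psi_k}$ may cross from one branch to the next; there I would simply invoke continuity of the strictly monotone root of $t\mapsto P_{\Psi_k}(t)$ across the junctions, so that the convergence $t_{\Psi_k}\to t_\Phi$ is unaffected.
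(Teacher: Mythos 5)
Your proof is correct and follows essentially the same route as the paper: both take as $\Psi_k$ the subsystem of all words in $\Phi^k$ of a single type whose letter frequencies approximate the equilibrium weights $p_i$ of the dominant branch of the pressure at $t_\Phi$ (so homogeneity is automatic), and both deduce $|t_{\Psi_k}-t_\Phi|=O(\log k/k)$ from the resulting entropy/Lyapunov-exponent identities. The only differences are technical rather than structural: you count the type class via Stirling's formula where the paper invokes a local limit theorem bound from Spitzer, and you make explicit the branch-matching, the a priori bound $t_{\Psi_k}\le t_\Phi$, and the regime $t_\Phi\ge 2$, all of which the paper leaves implicit.
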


\begin{proof}
	Throughout the proof we follow the line of Peres and Shmerkin \cite[Proposition~6]{PS}.
	
	For every $\ii=(i_1,i_2,\dots)\in\Sigma=\left\{1,\dots,m\right\}^{\N}$ let $X_k(\ii)=\sum_{j=1}^k\underline{e}_{i_j}$, where $\underline{e}_j$ are the coordinate vectors of $\R^m$, and $m$ is the number of the functions in $\Phi$.
	
	Denote the subadditive pressure function, defined in \eqref{esap}, by $P_{\Phi}$ and the root by $t_{\Phi}$. Without loss of generality, we may assume that $s_{\beta}\leq s_{\alpha}$, where $s_{\alpha}$ and $s_{\beta}$ denote the similarity dimension of the systems $\Phi_{\alpha}$ and $\Phi_{\beta}$, see \eqref{eprojIFS} and \eqref{esimdim}. Thus,
	\begin{eqnarray}
	s_{\alpha}\leq1 & \Rightarrow & \alpha_1^{t_{\Phi}}+\cdots+\alpha_m^{t_{\Phi}}=1, \label{eroot1}\\
	s_{\alpha}>1 & \Rightarrow & \alpha_1\beta_1^{t_{\Phi}-1}+\cdots+\alpha_m\beta_m^{t_{\Phi}-1}=1.\label{eroot2}
	\end{eqnarray}
	Fix a $\underline{p}=(p_1,\dots,p_m)$ probability vector as follows, let $p_i:=\alpha_i^{t_{\Phi}}$ if $s_{\alpha}\leq1$, and let $p_i:=\alpha_i\beta_i^{t_{\Phi}-1}$ otherwise. Define $\mathbb{P}:=\left\{p_1,\dots,p_m\right\}^{\N}$ probability measure on $\Sigma$. Then $\int X_k(\ii)d\mathbb{P}(\ii)=k\sum_{i=1}^mp_i\underline{e}_i$.
	
	Let $\underline{v}_k:=(v_{1,k},\dots,v_{m,k})$ that $|v_{i,k}-kp_i|<1$ for $i=1,\dots,m$. Then by \cite[P9, Chapter II]{Sp}, there exists a $c>0$ independent of $k$ such that
	\begin{equation}\label{et1}
	\mathbb{P}\left(\left\{\ii\in\Sigma:X_k(\ii)=\underline{v}_k\right\}\right)\geq ck^{-\frac{m}{2}}.
	\end{equation}
	Define $N_k=\left\{[i_1,\dots,i_k]:\sharp\left\{n\leq k:i_n=l\right\}=v_{l,k}\right\}$. Then
	\begin{equation}\label{et2}
	\sharp N_k\prod_{l=1}^mp_{l}^{kp_l}\prod_{l=1}^mp_l^{-1}\geq\sharp N_k\prod_{l=1}^mp_{l}^{v_{l,k}}=\mathbb{P}\left(\left\{\ii\in\Sigma:X_k(\ii)=\underline{v}_k\right\}\right).
	\end{equation}
	Thus, by \eqref{et1} and \eqref{et2}
	$$
	\sharp N_k\geq ck^{-\frac{m}{2}}\prod_{l=1}^mp_{l}^{1-kp_l}.
	$$
	Let $\Psi_k$ be the IFS
	$$
	\Psi_k:=\left\{S_{i_1}\circ\cdots\circ S_{i_k}\right\}_{[i_1,\dots,i_k]\in N_k}.
	$$
	Observe that every function $T_j\in\Psi_k$ has the form $T_j:(x,y)\mapsto(\widehat{\alpha}_kx+t_{1,k,j}',\widehat{\beta}_ky+t_{2,k,j}')$, where $\widehat{\alpha}_k=\prod_{l=1}^m\alpha_i^{v_{l,k}}$ and $\widehat{\beta}_k=\prod_{l=1}^m\beta_i^{v_{l,k}}$, that is, $\Psi_k$ is homogeneous. On the other hand, by using the definition of subadditive pressure \eqref{esap} the root satisfies the following formula
	\begin{equation}\label{ehomosap}
	\min\left\{\sharp N_k\widehat{\alpha}_k^{t_{\Psi_k}},\sharp N_k\widehat{\alpha}_k\widehat{\beta}_k^{t_{\Psi_k}-1}\right\}=1
	\end{equation}
	Hence, there exists a constant $C>0$ such that
	$$
	|t_{\Phi}-t_{\Psi_k}|\leq C\frac{\log k}{k},
	$$
	which completes the proof.
\end{proof}

\begin{proof}[Proof of Theorem~D]
	Let $\Phi$ be the IFS of the form \eqref{eIFS1} with attractor $\Lambda$, and let $\Phi_{\alpha}$ and $\Phi_{\beta}$ be the projected IFSs to the $x$- and $y$-axis with attractors $\Lambda_{\alpha}$ and $\Lambda_{\beta}$.
	
	First, let us suppose that condition~(1) holds. By Theorem~B(1) and \cite[Corollary~1.2]{H}
	$$\dim_H\Lambda_{\alpha}=\dim_H\Lambda=s_{\alpha}=t_{\Phi}.$$
	Applying Lemma~\ref{lapproxsys}, for every $\varepsilon>0$ there exists a homogeneous IFS $\Psi\subseteq\Phi^k$ for some $k$ such that $|t_{\Phi}-~t_{\Psi}|<~\varepsilon/2$. On the other hand, it is easy to see that, since $\Phi_{\alpha}$ satisfies the Hochman-condition, every subset of $\Phi_{\alpha}^k$ satisfies the Hochman-condition for every $k$. Denote the attractor of $\Psi$ by $\Gamma$ and denote the projected IFS to the $x$-axis by $\Psi_{\alpha}$ with attractor $\Gamma_{\alpha}$. Hence, applying Theorem~B(1) again,
	$$
	\dim_H\Gamma_{\alpha}=\dim_H\Gamma=t_{\Psi}.
	$$
	Applying Lemma~\ref{lhomogen} for $\Psi_{\alpha}$ we get that there is a homogeneous IFS $\Psi'\subseteq\Phi^{k'}$ for a $k'$ that $\Psi'_{\alpha}$ satisfies the SSC, and thus, $\Psi'$. Moreover, for the attractor $\Gamma'$ of $\Psi'$
	$$
	t_{\Phi}-\varepsilon\leq t_{\Psi}-\varepsilon/2\leq\dim_H\Gamma'_{\alpha}\leq\dim_H\Gamma'\leq t_{\Phi},
	$$
	which proves the first case.
	
	Now, we turn to the case when condition~(2) holds. By Theorem~B(2)
	$$\dim_H\Lambda=t_{\Phi}.$$
	Applying Lemma~\ref{lapproxsys}, for every $\varepsilon>0$ there exists a homogeneous IFS $\Psi\subseteq\Phi^k$ for a $k$ that $|t_{\Phi}-~t_{\Psi}|<~\varepsilon/2$. Denote the attractor of $\Psi$ by $\Gamma$ and denote the projected IFS to the $y$-axis by $\Psi_{\beta}$ with attractor $\Gamma_{\beta}$. Denote the contracting ratios of $\Psi$ by $\widehat{\alpha}$ and $\widehat{\beta}$. Since $\Psi_{\beta}$ is homogeneous and satisfies the Hochman-condition, we have
	$$
	\dim_H\Gamma_{\beta}=\frac{\log\sharp\Psi}{-\log\widehat{\beta}}.
	$$
	Applying Lemma~\ref{lhomogen} to $\Psi_{\beta}$, we can prove the existence of a homogeneous IFS $\Psi'\subseteq\Psi^{k'}$ for a $k'$ such that $\Psi'_{\beta}$ satisfies the SSC, and so does $\Psi'$. On the other hand,
	$$
	\dim_H\Gamma'_{\beta}=\frac{\log\sharp\Psi'}{-k'\log\widehat{\beta}}\geq\frac{\log\sharp\Psi}{-\log\widehat{\beta}}-\frac{\varepsilon}{2},
	$$
	which implies that $\sharp\Psi'\geq\sharp\Psi^{k'}\widehat{\beta}^{\frac{k'\varepsilon}{2}}$. Using \eqref{ehomosap} for the root of the subadditive pressure of $\Psi'$
	$$
	1=\sharp\Psi'\widehat{\alpha}^{k'}\widehat{\beta}^{k'(t_{\Psi'}-1)}\geq\left(\sharp\Psi\widehat{\alpha}\widehat{\beta}^{t_{\Psi'}+\varepsilon/2-1}\right)^{k'}.
	$$
	Hence, $t_{\Psi}-\varepsilon/2\leq t_{\Psi'}$ and by Theorem~B(2), $\dim_H\Gamma'=t_{\Psi'}$ which completes the proof.
\end{proof}

\section{Proof of Proposition~C}

Finally, we get a bound on the dimension of the exceptional parameters. The statement is based on the dimension of exceptional parameters for self-similar IFSs.

\begin{lemma}\label{lexcselfsim}
		Let $\left\{r_i\right\}_{i=1}^m$ be a set of real numbers such that $r_i\in(-1,1)$ for every $i=1,\dots,m$ and $\max_{i\neq j}\left\{|r_i|+|r_j|\right\}<1$. Then there exists a set $E\subset\R^m$ such that $\dim_PE\leq m-1$ and for every $(t_1,\dots,t_m)\in\R^m\setminus E$ the IFS $\mathcal{G}=\left\{x\mapsto r_ix+t_i\right\}_{i=1}^m$ satisfies the Hochman-condition.
\end{lemma}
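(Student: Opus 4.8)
The plan is to work in the parameter space $\R^m$ of translation vectors $\underline t=(t_1,\dots,t_m)$, the ratios $r_1,\dots,r_m$ being fixed throughout. For a word $\il=(i_1,\dots,i_n)$ the image $f_{\il}(0)$ is an affine (indeed linear) function of $\underline t$, namely $f_{\il}(0)=\langle\underline c(\il),\underline t\rangle$ with coefficient vector $\underline c(\il)=\sum_{k=1}^n\big(\prod_{u<k}r_{i_u}\big)\underline e_{i_k}$ obeying the recursion $\underline c(\il)=\underline e_{i_1}+r_{i_1}\underline c(\sigma\il)$. Hence, for any pair $\il\neq\jl\in\{1,\dots,m\}^n$ with $f_{\il}'(0)=f_{\jl}'(0)$, the difference $\Delta_{\il,\jl}(\underline t):=f_{\il}(0)-f_{\jl}(0)=\langle\underline c(\il)-\underline c(\jl),\underline t\rangle$ is affine in $\underline t$ with constant gradient $\underline c(\il)-\underline c(\jl)$, so $\{|\Delta_{\il,\jl}|\le\delta\}$ is a slab of thickness $\asymp\delta/\|\underline c(\il)-\underline c(\jl)\|$ around the hyperplane $\{\Delta_{\il,\jl}=0\}$.

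The heart of the argument is a uniform transversality estimate: a lower bound on $\|\underline c(\il)-\underline c(\jl)\|$. I would introduce the weighted norm $\|\underline v\|_*:=\sum_{l=1}^m(1-|r_l|)\,|v_l|$, tailored to the hypothesis. First, by the recursion and induction, $\|\underline c(\overline w)\|_*\le(1-|r_{w_1}|)+|r_{w_1}|\cdot\sup\|\underline c\|_*$, giving $\sup_{\overline w}\|\underline c(\overline w)\|_*\le1$. Then, if $\il,\jl$ have distinct first symbols $a\neq b$,
\[
\|\underline c(\il)-\underline c(\jl)\|_*\ge\|\underline e_a-\underline e_b\|_*-|r_a|\,\|\underline c(\sigma\il)\|_*-|r_b|\,\|\underline c(\sigma\jl)\|_*\ge 2-2\big(|r_a|+|r_b|\big)\ge 2\eta,
\]
where $\eta:=1-\max_{i\neq j}\{|r_i|+|r_j|\}>0$. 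For a general pair first differing at position $k$, factoring the common prefix gives $\underline c(\il)-\underline c(\jl)=\big(\prod_{u<k}r_{i_u}\big)\big(\underline c(\il'')-\underline c(\jl'')\big)$ with $\il'',\jl''$ having distinct first symbols, whence $\|\underline c(\il)-\underline c(\jl)\|\gtrsim\rho_{\min}^{\,n}\,\eta$ with $\rho_{\min}=\min_i|r_i|>0$. In particular every $\Delta_{\il,\jl}\not\equiv0$, so there is no parameter-independent (complete) overlap.

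Next I would translate the failure of the Hochman-condition. A parameter $\underline t$ lies in the exceptional set $E$ iff for every $\varepsilon>0$ there is some $n$ and a pair with $|\Delta_{\il,\jl}(\underline t)|\le\varepsilon^n$. Either an exact coincidence $\Delta_{\il,\jl}(\underline t)=0$ occurs at some finite level — these parameters lie in the countable union $\bigcup_n\bigcup_{\text{pairs}}\{\Delta_{\il,\jl}=0\}$ of genuine hyperplanes, of packing dimension $m-1$ — or, for a fixed $\varepsilon$, super-exponential closeness occurs at infinitely many levels, i.e.\ $\underline t\in\widetilde A_\varepsilon:=\limsup_{n}\bigcup_{\text{pairs}(n)}\{|\Delta_{\il,\jl}|\le\varepsilon^n\}$. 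Thus $E\subseteq(\text{hyperplanes})\cup\widetilde A_\varepsilon$ for every $\varepsilon>0$. By the slab estimate each $\{|\Delta_{\il,\jl}|\le\varepsilon^n\}$ sits in a $C(\varepsilon/\rho_{\min})^n$-neighbourhood of a hyperplane, and there are at most $m^{2n}$ pairs at level $n$.

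Finally, for $\varepsilon<\rho_{\min}$ the set $\widetilde A_\varepsilon$ is a $\limsup$ of unions of at most $m^{2n}$ neighbourhoods of hyperplanes of thickness $\le C\delta^n$, $\delta=\varepsilon/\rho_{\min}<1$. Following the covering method of Fraser and Shmerkin~\cite{FS}, such a set has $\dim_P\widetilde A_\varepsilon\le m-1+\frac{2\log m}{\log(1/\delta)}$; here one must use packing (not box) dimension and its decomposition characterisation, because $\widetilde A_\varepsilon$ is typically dense (exact overlaps are dense), so its box dimension is $m$ — the point is that the super-exponential thinness $\delta^n$ beats the exponential count $m^{2n}$ in the packing-dimension bookkeeping, exactly as for Liouville-type sets. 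Combining with the hyperplane part and letting $\varepsilon\to0$ yields $\dim_P E\le m-1$. I expect the main obstacle to be precisely the uniform transversality bound with signs under the weak pairwise hypothesis: the weighted norm with weights $1-|r_l|$ is what makes the constant come out to $2\eta=2\big(1-\max_{i\neq j}\{|r_i|+|r_j|\}\big)>0$, with the density subtlety forcing the use of packing dimension as the secondary difficulty.
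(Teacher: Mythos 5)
Your transversality step is correct, and it is in fact a genuinely nicer route than the paper's: where the paper lifts $\mathcal{G}$ to a strongly separated system in $\R^{m-1}$ (via Simon--Solomyak) and extracts the gradient lower bound from compactness, your weighted norm $\|\underline{v}\|_{*}=\sum_{l}(1-|r_l|)|v_l|$ gives the uniform bound $\|\underline{c}(\il)-\underline{c}(\jl)\|\gtrsim\eta\,\rho_{\min}^{\,n}$ by a short induction, directly in the translation parameters. Your decomposition $E\subseteq(\text{exact-overlap hyperplanes})\cup\widetilde A_\varepsilon$ is also correct. The proof fails at the final step, and it cannot be repaired in the form you propose.

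The claim $\dim_P\widetilde A_\varepsilon\le m-1+\frac{2\log m}{\log(1/\delta)}$ is false, and your own analogy points at the problem: the set of Liouville numbers has Hausdorff dimension $0$ but \emph{packing dimension $1$} --- limsup sets of dense thin neighbourhoods are exactly the sets for which packing dimension does \emph{not} improve on box dimension. Concretely, $\widetilde A_\varepsilon=\bigcap_N\bigcup_{n>N}A_n$ with each $A_n$ a finite union of open slabs, so $\widetilde A_\varepsilon$ is a $G_\delta$; and under the hypotheses of the lemma the exact-overlap hyperplanes can be dense in an open set $U$ (take $m=3$, $r_i\equiv 0.45$: pigeonholing the $3^n$ values $f_{\il}(0)$, all with derivative $(0.45)^n$, in a bounded interval against your gradient bound $\gtrsim\eta(0.45)^n$ places a level-$n$ hyperplane within distance $C(1.35)^{-n}$ of every parameter). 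Then every $\bigcup_{n>N}A_n$ is open and dense in $U$, so $\widetilde A_\varepsilon\cap U$ is comeager in $U$; if $\widetilde A_\varepsilon\cap U$ admitted a countable cover $\{E_i\}$ with $\overline{\dim}_B E_i<m$ for all $i$, then each $\overline{E_i}$ would be closed with empty interior (upper box dimension is unchanged by closure), making $\widetilde A_\varepsilon\cap U$ meager in $U$ --- contradicting Baire. Hence $\dim_P\widetilde A_\varepsilon=m$, and no bookkeeping in which ``thinness beats the count'' can be valid for a limsup. What makes the argument work in the paper is the opposite order of quantifiers: the exceptional set is placed inside the \emph{liminf}-type set
\[
\bigcap_{\varepsilon>0}\ \bigcup_{N=1}^{\infty}\ \bigcap_{n>N}\ \bigcup_{\il\neq\jl\in\mathcal{S}^n}\Delta_{\il,\jl}^{-1}(-\varepsilon^n,\varepsilon^n),
\]
each tail intersection $\bigcap_{n>N}(\cdots)$ is then covered, at \emph{every} scale $(\varepsilon/\rho_{\min})^n$ with $n>N$, by at most $m^{2n}(\rho_{\min}/\varepsilon)^{(m-1)n}$ balls (in your $\R^m$ setting), giving $\overline{\dim}_B\le m-1+\frac{2\log m}{\log(\rho_{\min}/\varepsilon)}$, and only then is countable stability of $\dim_P$ invoked over the union in $N$, followed by $\varepsilon\to0$. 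So to fix your proof you must upgrade ``$|\Delta_{\il,\jl}|\le\varepsilon^n$ at infinitely many levels'' to ``at all sufficiently large levels''. For exact overlaps this is automatic (append common suffixes), and the clean way to get it in general is to work with the variant of the Hochman-condition requiring $\min_{\il\neq\jl\in\mathcal{S}^n}\Delta(\il,\jl)\ge\varepsilon^n$ only for infinitely many $n$ --- this is still sufficient for Hochman's dimension theorem, hence for Theorems A and B, and its failure set is exactly the liminf set above; the limsup set you construct is genuinely too large.
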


This is lemma is a corollary \cite[Theorem~1.10]{H2}. We present here a self-contained proof based on the method of Fraser and Shmerkin~\cite[Proposition~4.3]{FS}. Before we prove Lemma~\ref{lexcselfsim}, we need a technical lemma.

\begin{lemma}\label{llifted}
	Let $\left\{r_i\right\}_{i=1}^m$ be a set of real numbers such that $r_i\in(-1,1)$ for every $i=1,\dots,m$ and $\max_{i\neq j}\left\{|r_i|+|r_j|\right\}<1$. Then there are vectors $\av_i\in\R^{m-1}$ such that the vectors $\left\{(\av_i,1-r_i)\right\}_{i=1}^m$ are linearly independent in $\R^{m}$ and the IFS $\mathcal{G}'=\left\{g_i:\underline{x}\mapsto r_i\underline{x}+\av_i\right\}_{i=1}^m$ satisfies the strong separation condition on $[-1,1]^{m-1}$.
\end{lemma}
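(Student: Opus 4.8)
The plan is to handle the two requirements separately: first exhibit one explicit placement of the maps realising the strong separation condition (this is where the hypothesis $\max_{i\neq j}\{|r_i|+|r_j|\}<1$ is used essentially), and then upgrade it to also give linear independence by a genericity argument, since the linearly dependent configurations form a measure-zero set of translations.

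For the SSC I would exploit that the cube $[-1,1]^{m-1}$ has $2^{m-1}\geq m$ corners, and seat each map in a corner of its own. Concretely, assign to each $i$ a distinct sign vector $\sigma_i\in\{-1,+1\}^{m-1}$ and set $\av_i=(1-|r_i|)\sigma_i$. Then $g_i([-1,1]^{m-1})$ is the axis-parallel box centred at $\av_i$ with half-side $|r_i|$, and it lies in $[-1,1]^{m-1}$ since $\|\av_i\|_\infty+|r_i|=1$. For $i\neq j$ the vectors $\sigma_i,\sigma_j$ differ in some coordinate $k$, and there the centres are separated by $|\av_i^{(k)}-\av_j^{(k)}|=(1-|r_i|)+(1-|r_j|)=2-|r_i|-|r_j|$; this exceeds $|r_i|+|r_j|$ exactly because $|r_i|+|r_j|<1$, so the two boxes are disjoint and $\mathcal{G}'$ satisfies the SSC. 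This corner-packing step is the only substantive point, and I expect it to be the main obstacle: one must realise that the hypothesis is precisely the statement that two opposite-corner boxes clear each other, and that the large cube (which may have $|r_i|$ close to $1$) must be pushed to a corner rather than centred. Replacing $|r_i|$ by $|r_i|+\delta$ for a small $\delta<1-\max_{i\neq j}\{|r_i|+|r_j|\}$ gives strict inclusion and strict disjointness, hence a whole open set of admissible translations.

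Next I would note that linear independence of $\{(\av_i,1-r_i)\}_{i=1}^m$ is the condition $\det M\neq0$, where $M$ is the $m\times m$ matrix whose $i$th row is $(\av_i,1-r_i)$. Viewed as a function of the entries of the $\av_i$, $\det M$ is a polynomial that is not identically zero: taking $\av_i=\underline{e}_i$ for $i\leq m-1$ and $\av_m=\underline{0}$ makes $M$ block upper-triangular with determinant $\pm(1-r_m)\neq0$ (using $r_m<1$). Hence its zero set is a proper algebraic subvariety of $(\R^{m-1})^m$, of measure zero and with empty interior.

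Finally I would combine the two observations. The set of translations $(\av_1,\dots,\av_m)$ for which $\mathcal{G}'$ satisfies the SSC (with strict inclusion and strict disjointness) is open and, by the corner construction, non-empty; it therefore cannot be contained in the zero set of $\det M$. Choosing $(\av_1,\dots,\av_m)$ in this open set off $\{\det M=0\}$ yields vectors meeting both requirements, which proves the lemma. No delicate estimate is needed beyond the corner placement; the linear-independence clause is then automatic by genericity.
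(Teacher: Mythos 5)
Your proof is correct and complete: the corner placement $\av_i=(1-|r_i|)\sigma_i$ with distinct sign vectors $\sigma_i\in\{-1,+1\}^{m-1}$ (possible since $2^{m-1}\geq m$) gives images inside the cube whose separation in a differing coordinate is $2-|r_i|-|r_j|>|r_i|+|r_j|$ precisely under the hypothesis, the $\delta$-shrinking turns this into a nonempty open set of admissible translations, and the determinant of the matrix with rows $(\av_i,1-r_i)$ is a not-identically-zero polynomial, so it cannot vanish on that open set. Note, however, that the paper does not prove the lemma at all; it cites Simon and Solomyak \cite[Proof of Theorem~2.1(b) and (c)]{SS}, whose construction (going back to Falconer \cite{F2}) proceeds differently: the maps are placed so that their fixed points $v_i=\av_i/(1-r_i)$ are affinely independent, in essence vertices of a regular simplex in $\R^{m-1}$. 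That single choice handles both conclusions at once: the images $g_i$ of a suitable convex region lie in balls of radius about $|r_i|$ around the $v_i$, so the SSC follows from $\max_{i\neq j}\left\{|r_i|+|r_j|\right\}<1$, and linear independence is automatic because $(\av_i,1-r_i)=(1-r_i)(v_i,1)$ and affine independence of the $v_i$ is exactly linear independence of the lifted vectors $(v_i,1)$. Your route trades this for elementary geometry: the corner packing makes the separation estimate transparent (two opposite corner boxes clear each other if and only if $|r_i|+|r_j|<1$), but since distinct corners of a cube need not be affinely independent you must restore linear independence separately, which your genericity step does correctly. Both arguments are valid; the simplex placement buys linear independence for free, while yours is self-contained and needs no metric computation beyond the one-coordinate estimate.
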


The proof can be found in Simon and Solomyak~\cite[Proof of Theorem~2.1(b) and (c)]{SS}, therefore we omit it.

Let $\wv$ be a non-zero vector in $\R^{m-1}$ and let $\Pi_{\wv}:\xv\mapsto\wv\cdot\xv$ be the linear projection to the line determined by $\wv$. Then it is easy to see that 
\begin{equation}\label{etrans}
	\|\grad_{\wv}\left(\Pi_{\wv}(\xv)-\Pi_{\wv}(\yv)\right)\|=\|\xv-\yv\|.
\end{equation}

\begin{lemma}\label{ltrans}
	There exists a $\delta>0$ such that for every $n\geq1$ and every $\il\neq\jl\in\mathcal{S}^n=\left\{1,\dots,m\right\}^n$ 
	$$
	\max\left\{|\Pi_{w}(g_{\il}(\underline{0}))-\Pi_{w}(g_{\jl}(\underline{0}))|,\|\grad_{\wv}\left(\Pi_{\wv}(g_{\il}(\underline{0}))-\Pi_{\wv}(g_{\jl}(\underline{0}))\right)\|\right\}>\delta^n,
	$$
	where $g_i$ are the functions defined in Lemma~\ref{llifted} and $g_{\il}$ denotes the composition $g_{\il}=g_{i_1}\circ\cdots\circ g_{i_n}$ (and similarly for $g_{\jl}$).
\end{lemma}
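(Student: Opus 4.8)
The plan is to collapse the maximum onto its second entry and then read off the required separation directly from the strong separation condition produced in Lemma~\ref{llifted}. Since $\Pi_{\wv}(\xv)=\wv\cdot\xv$ is linear in $\xv$, the difference $\Pi_{\wv}(g_{\il}(\underline{0}))-\Pi_{\wv}(g_{\jl}(\underline{0}))=\wv\cdot\bigl(g_{\il}(\underline{0})-g_{\jl}(\underline{0})\bigr)$ has $\wv$-gradient equal to $g_{\il}(\underline{0})-g_{\jl}(\underline{0})$, so by \eqref{etrans} the second entry of the maximum is exactly $\|g_{\il}(\underline{0})-g_{\jl}(\underline{0})\|$. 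The maximum is therefore bounded below by this distance, the first entry can be discarded, and it suffices to exhibit a single $\delta>0$, independent of $n$, with
\[
\|g_{\il}(\underline{0})-g_{\jl}(\underline{0})\|>\delta^{n}\qquad\text{for all }\il\neq\jl\in\mathcal{S}^{n}.
\]

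To produce such a $\delta$ I would invoke that, by Lemma~\ref{llifted}, the maps $g_i(\xv)=r_i\xv+\av_i$ satisfy the strong separation condition on $K:=[-1,1]^{m-1}$, so that $g_1(K),\dots,g_m(K)$ are pairwise disjoint compact subsets of $K$ and
\[
\rho:=\min_{i\neq j}\mathrm{dist}\bigl(g_i(K),g_j(K)\bigr)>0,\qquad r_{\min}:=\min_{i}|r_i|\in(0,1).
\]
Given distinct $\il,\jl\in\mathcal{S}^{n}$, I would let $k$ be the first index at which they differ and factor out the common prefix $g_{i_1}\circ\cdots\circ g_{i_{k-1}}$, which acts as a similarity of ratio $r_{i_1}\cdots r_{i_{k-1}}$; this gives
\[
\|g_{\il}(\underline{0})-g_{\jl}(\underline{0})\|=|r_{i_1}\cdots r_{i_{k-1}}|\cdot\|g_{i_k\cdots i_n}(\underline{0})-g_{j_k\cdots j_n}(\underline{0})\|.
\]
Because $\underline{0}\in K$ and each $g_i$ maps $K$ into itself, the points $g_{i_k\cdots i_n}(\underline{0})$ and $g_{j_k\cdots j_n}(\underline{0})$ lie in the disjoint cylinders $g_{i_k}(K)$ and $g_{j_k}(K)$, hence are at distance at least $\rho$. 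Combining the two displays yields $\|g_{\il}(\underline{0})-g_{\jl}(\underline{0})\|\geq r_{\min}^{\,k-1}\rho\geq r_{\min}^{\,n-1}\rho$, and any $0<\delta<\min\{\rho,r_{\min}\}$ then satisfies $\delta^{n}<r_{\min}^{\,n-1}\rho$ for every $n\geq1$.

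I do not expect a genuine obstacle here: the identity \eqref{etrans} is precisely what removes the projection direction from the problem, and the strong separation condition supplied by Lemma~\ref{llifted} delivers the exponential gap essentially for free. The only steps requiring care are verifying the positivity of the separation constant $\rho$ and of the ratio lower bound $r_{\min}$ (both immediate from the SSC and from the $g_i$ being genuine similarities), and the elementary bookkeeping that converts the bound $r_{\min}^{\,n-1}\rho$ into a clean single exponential $\delta^{n}$ that is uniform in $n$.
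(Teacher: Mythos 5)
Your proof is correct, and it rests on the same two ingredients as the paper's---the identity \eqref{etrans}, which turns the gradient entry of the maximum into the Euclidean distance $\|g_{\il}(\underline{0})-g_{\jl}(\underline{0})\|$, and the strong separation of the lifted system $\mathcal{G}'$ from Lemma~\ref{llifted} combined with factoring out the common prefix---but it executes the key step differently. The paper reduces to words with $i_1\neq j_1$ (absorbing the prefix into the constant via $\delta=\varepsilon\min_i|r_i|$, exactly as you do) and then obtains the uniform lower bound for that base case non-constructively: it argues by contradiction, using compactness as $\varepsilon\to0+$ to extract limit points $\xv\neq\yv$ lying in distinct first-level cylinders of the attractor whose $\wv$-gradient difference vanishes, contradicting \eqref{etrans}. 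You instead bound the base case directly by the explicit constant $\rho=\min_{i\neq j}\mathrm{dist}(g_i(K),g_j(K))>0$ with $K=[-1,1]^{m-1}$, using that $\underline{0}\in K$ and $g_i(K)\subset K$, which is precisely what the SSC on $[-1,1]^{m-1}$ in Lemma~\ref{llifted} provides. Your route buys an explicit admissible $\delta$ (any $0<\delta<\min\{\rho,\min_i|r_i|\}$) and avoids the compactness limit argument, which the paper states somewhat loosely (the passage to limit points in the attractor and the strict inequality there are left to the reader); the paper's version uses only the separation of the attractor's cylinders rather than of the cube images, but that is a weaker fact than the one Lemma~\ref{llifted} already supplies, so nothing is lost in your argument. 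One shared caveat: like the paper, you implicitly need $\min_i|r_i|>0$, i.e.\ all $r_i$ nonzero, which is harmless since it holds wherever the lemma is applied.
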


\begin{proof}
	It is enough to show that there exists a $\varepsilon>0$ that for every $n\geq1$ and every $\il\neq\jl\in\mathcal{S}^n=\left\{1,\dots,m\right\}^n$ with $i_1\neq j_1$
	\begin{equation*} 
	\max\left\{|\Pi_{w}(g_{\il}(\underline{0}))-\Pi_{w}(g_{\jl}(\underline{0}))|,\|\grad_{\wv}\left(\Pi_{\wv}(g_{\il}(\underline{0}))-\Pi_{\wv}(g_{\jl}(\underline{0}))\right)\|\right\}>\varepsilon,
	\end{equation*}
	by choosing $\delta=\varepsilon\min_i\left\{|r_i|\right\}$. Let us argue by contradiction. Suppose that for every $\varepsilon>0$ there exist $n\geq1$ and $\il,\jl\in\mathcal{S}^n$ with $i_1\neq j_1$ such that
	$$
	\max\left\{|\Pi_{w}(g_{\il}(\underline{0}))-\Pi_{w}(g_{\jl}(\underline{0}))|,\|\grad_{\wv}\left(\Pi_{\wv}(g_{\il}(\underline{0}))-\Pi_{\wv}(g_{\jl}(\underline{0}))\right)\|\right\}\leq\varepsilon.
	$$
	By compactness and by letting $\varepsilon\rightarrow0+$, we get that there exists $\xv,\yv\in\Theta(\mathcal{G}')$ such that $\|\xv-\yv\|>\min_{i\neq j}\left\{\mathrm{dist}(g_i(\Theta(\mathcal{G}')),g_j(\Theta(\mathcal{G}')))\right\}>0$ and $\|\grad_{\wv}\left(\Pi_{\wv}(\xv)-\Pi_{\wv}(\yv)\right)\|=0$, where $\Theta(\mathcal{G}')$ denotes the attractor of $\mathcal{G'}$. But by \eqref{etrans}, it is a contradiction.
\end{proof}

\begin{proof}[Proof of Lemma~\ref{lexcselfsim}]
	Falconer showed in \cite[Proof of Theorem~1]{F2} that the projections of $\mathcal{G'}$ in Lemma~\ref{llifted} to lines in $\R^{m-1}$ through the origin and the IFS $\mathcal{G}$ are linearly equivalent. That is, for every $(t_1,\dots,t_m)\in\R^m$ there exists a unique vector $(x_0,\wv)\in\R^m$ such that $t_i=x_0+\wv\cdot\av_i$. Thus, it is enough to show that there exists a set $E\subset\R^{m-1}$ such that $\dim_PE\leq m-2$ and the IFS $\left\{x\mapsto r_ix+\Pi_{\wv}(\av_i)\right\}$ satisfies the Hochman-condition for $\wv\in\R^{m-1}\setminus E$.
	
	For a $\il,\jl\in\mathcal{S}^n$ let $\Delta_{\il,\jl}(\wv):=\Pi_{\wv}(g_{\il}(0))-\Pi_{\wv}(g_{\jl}(0))$. It follows from the definition of exceptional set that 
	$$
	E\subseteq\bigcap_{\varepsilon>0}\bigcup_{N=1}^{\infty}\bigcap_{n>N}\bigcup_{\il\neq\jl\in\mathcal{S}^n}\Delta_{\il,\jl}^{-1}(-\varepsilon^n,\varepsilon^n).
	$$
	Since $\wv\mapsto\Pi_{\wv}(g_{\il}(0))$ is linear, so $\wv\mapsto\Delta_{\il,\jl}(\wv)$ is. By Lemma~\ref{ltrans}, $\Delta_{\il,\jl}^{-1}(-\varepsilon^n,\varepsilon^n)$ is contained in a $(\varepsilon/\delta)^n$-neighbourhood of the hyperplane $\Delta_{\il,\jl}^{-1}(0)$. Hence, $\bigcup_{\il\neq\jl\in\mathcal{S}^n}\Delta_{\il,\jl}^{-1}(-\varepsilon^n,\varepsilon^n)$ can be covered by at most $Cm^{2n}(\delta/\varepsilon)^{(m-2)n}$ balls with radius $(\varepsilon/\delta)^n$, where $C$ is depending on $m$. Thus,
	$$
	\overline{\dim}_B\bigcap_{n>N}\bigcup_{\il\neq\jl\in\mathcal{S}^n}\Delta_{\il,\jl}^{-1}(-\varepsilon^n,\varepsilon^n)\leq m-2+\frac{2\log m}{-\log(\varepsilon/\delta)}
	$$
	By using the definition of packing dimension,
	$$
	\dim_PE\leq\lim_{\varepsilon\rightarrow0}m-2+\frac{2\log m}{-\log(\varepsilon/\delta)}=m-2.
	$$
\end{proof}

\begin{proof}[Proof of Proposition~C]
	For $i=1,\ldots,m$ let $\alpha_i, \beta_i$ satisfy $\alpha_i, \beta_i \in (-1,1)\setminus \{0\}$, such that $\max_{i\neq j}\left\{|\alpha_i|+|\alpha_j|\right\}<1$ and $\sum_{i=1}^m|\beta_i|\leq1$. Then by Lemma~\ref{lexcselfsim} there exist sets $E_1,E_2\subset\R^m$ such that $\dim_PE_1,\dim_PE_2\leq m-1$ and the IFSs $\Phi_{\alpha}=\left\{x\mapsto\alpha_ix+t_{i,1}\right\}_{i=1}^m$ and $\Phi_{\beta}=\left\{x\mapsto\beta_ix+t_{i,2}\right\}_{i=1}^m$ satisfy the Hochman-condition simultaneously for every $(t_{1,1},\dots,t_{m,1})\in\R^m\setminus E_1$ and $(t_{1,2},\dots,t_{m,2})\in\R^m\setminus E_2$. Thus, the IFS of the form \eqref{eIFS1} satisfies the assumptions of Theorem~A and Theorem~B for every $(t_{1,1},\dots,t_{m,1},t_{1,2},\dots,t_{m,2})\in\R^{2m}\setminus E_1\times E_2$. By using the product property of the packing dimension, we get $\dim_PE_1\times E_2\leq\dim_PE_1+\dim_PE_2\leq2m-2$, which completes the proof.
\end{proof}

\end{document}